\newcommand{\C}{\mathbb C}
\newcommand{\R}{\mathbb R}
\newcommand{\Z}{\mathbb Z}
\newcommand{\N}{\mathbb N}
\newcommand{\RR}{\mathcal R}
\newcommand{\DD}{\mathcal D}
\newcommand{\re}{\mathrm{Re}}
\newcommand{\im}{\mathrm{Im}}
\newcommand{\e}{\varepsilon}
\newcommand{\Orb}{\operatorname{Orb}}
\newtheorem{theorem}{Theorem}[section]
\newtheorem{definition}[theorem]{Definition}
\newtheorem{thm}{Theorem}[section]
\newtheorem{lem}[thm]{Lemma}
\newtheorem{prop}[thm]{Proposition}
\newtheorem{dfn}[thm]{Definition}
\newtheorem{cor}[thm]{Corollary}
\newtheorem{rk}[thm]{Remark}
\title{Hyers-Ulam stability of Hyperbolic M\"obius difference equation}
\author{Young Woo Nam}
\date{}
\begin{document}

\maketitle

\begin{abstract}
Hyers-Ulam stability of the difference equation with the initial point $ z_0 $ as follows 
$$
z_{i+1} = \frac{az_i + b}{cz_i + d}
$$
is investigated for complex numbers $ a,b,c $ and $ d $ where $ ad - bc = 1 $, $ c \neq 0 $ and $a + d \in \mathbb{R} \setminus [-2,2] $. The stability of the sequence $ \{z_n\}_{n \in \mathbb{N}_0} $ holds if the initial point is in the exterior of a certain disk of which center is $ -\frac{d}{c} $. Furthermore, the region for stability can be extended to the complement of some neighborhood of the line segment between $ -\frac{d}{c} $ and the repelling fixed point of the map $ z \mapsto \frac{az + b}{cz + d} $. This result is the generalization of Hyers-Ulam stability of Pielou logistic equation. 
\end{abstract}

\section{Introduction}
Difference equation is the recurrence relation which defines the sequence and each of which terms determines the proceeding terms. For the introduction of difference equation, for example, see \cite{{elyadi}}. The first order difference equation is of the following form
$$
z_{i+1} = g(i,z_i)
$$
for all integer $ i \geq 0 $. %
In 1940, Ulam \cite{ulam} suggested the problem concerning
the stability of group homomorphisms \cite{PoADE}:
Given a metric group $(G, \cdot, d)$, a
positive number $\varepsilon$, and a function $f : G \to G$
which satisfies the inequality
$d \big( f(xy),\; f(x)f(y) \big) \leq \varepsilon$ for all
$x, y \in G$, do there exist an homomorphism $a : G \to G$ and
a constant $\delta$ depending only on $G$ and $\varepsilon$
such that $d \big( a(x), f(x) \big) \leq \delta$ for all
$x \in G$?
A first answer to this question was given by Hyers \cite{hyers}
in 1941 who proved that the Cauchy additive equation is stable
in Banach spaces. 
%
\smallskip \\
The difference equation has Hyers-Ulam stability if each terms of the sequence with the given relation has (small) error, this sequence is approximated by the sequence with same relation which has no error. Hyers-Ulam stability of difference equation is relatively recent topic. For example, see \cite{jung1, jungnam, jungnam1, PoADE}. 
In particular, Pielou logistic difference equation has Hyers-Ulam stability only if the initial point of the sequence is contained in definite intervals in \cite{jungnam1}. In the same paper, this result is extended to the following difference equation over $ \R $
\begin{align*}
x_{i+1} = \frac{ax_i + b}{cx_i + d}
\end{align*}
where $ ad - bc = 1 $, $ c \neq 0 $ and $ ( a + d )^2 > 4 $ for real numbers $ a $, $ b $, $ c $ and $ d $. In this article, we generalize the result of difference equations on the complex plane $ \C $ where $ a + d $ is real and it satisfies that $ ( a + d )^2 > 4 $ with complex numbers $ a $, $ b $, $ c $ and $ d $. 

\subsection*{M\"obius map}
Linear fractional map on the Riemann sphere $ \hat{\C} = \C \cup \{\infty\} $ is called {\em M\"obius map} or {\em M\"obius transformation}. 
$$ g(z) = \dfrac{az + b}{cz + d} $$ 
where $ ad - bc \neq 0 $ for $ z \in \hat{\C} $.
\smallskip \\
The non-constant M\"obius map $ g(z) = \frac{az+b}{cz+d} $ has the following properties.
\begin{itemize}
\item Without loss of generality, we may assume that $ ad - bc =1 $.
\item $ g(\infty) $ is defined as $ \frac{a}{c} $ and $ g \left(-\frac{d}{c} \right) $ is defined as $ \infty $.
\item The composition of two M\"obius maps is also a M\"obius map.
\item The map $ g $ is the linear map if and only if $ \infty $ is a fixed point of $ g $.
\item The image of circle or line under M\"obius map is circle or line.
\end{itemize}
The matrix representation of M\"obius map is useful to classify M\"obius map qualitatively. In particular, the equation $ \frac{az+b}{cz+d} = \frac{paz + pb}{pcz +pd} $ holds for all $ p \neq 0 $. We define the 
%
matrix representation of M\"obius map $ z \mapsto \frac{az+b}{cz+d} $ as follows
%
%
$ \left( 
\begin{smallmatrix}
a & b \\
c & d
\end{smallmatrix}
\right) $
where $ ad - bc = 1 $. We denote the matrix representation of M\"obius map $ g $ by also $ g $ unless it makes confusion. 
Denote the trace of the matrix representation of M\"obius map $ g $ by $ \mathrm{tr}(g) $. 
%
\subsection*{Main content}
 In Section 3, Hyers-Ulam stability of the sequence defined by hyperbolic M\"obius map on the exterior of the disk of which center is $ g^{-1}(\infty) $ with a certain radius. This is the direct generalization of Hyers-Ulam stability of Pielou logistic equation in \cite{jungnam1} on the complex plane. In Section 5, the {\em avoided region} at $ \infty $ is defined as the complement of the closure of the neighborhood of the line segment between $ g^{-1}(\infty) $ and the repelling fixed point of $ g $. 
 In Section 7, Hyers-Ulam stability of $ g $ is proved in the complement of an avoided region. 

\smallskip

\section{Hyperbolic M\"obius map}
 
The trace of matrix is invariant under conjugation. Thus qualitative classification of M\"obius map depends on the trace of matrix representation.
\begin{dfn}
If the matrix representation of the non-constant M\"obius map  
$ \left( 
\begin{smallmatrix}
a & b \\
c & d
\end{smallmatrix}
\right) $
has its trace $ a+d $, say $ \mathrm{tr}(g) $, is in the set $ \R \setminus [-2, 2] $, then the map $ g $ is called the {\em hyperbolic} M\"obius map.
\end{dfn}
Denote the fixed points of $ g $ by $ \alpha $ and $ \beta $. If $ |g'(\alpha)| < 1 $, then $ \alpha $ is called the {\em attracting} fixed point. If $ |g'(\beta)| > 1 $, then $ \beta $ is called the {\em repelling} fixed point.


\begin{lem} \label{lem-fixed points of hyp Mobius map}
Let $ g $ be the hyperbolic M\"obius map such that $ g(z) = \frac{az +b}{cz +d} $ where $ ad - bc =1 $ and $ c \neq 0 $. Then $ g $ has two different fixed points, one of which is the attracting fixed point and the other is the repelling fixed point.
\end{lem}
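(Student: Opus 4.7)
The plan is to find the fixed points by solving $g(z) = z$, which for $c \neq 0$ becomes the quadratic $cz^2 + (d-a)z - b = 0$. Its discriminant is $(d-a)^2 + 4bc = (a+d)^2 - 4(ad-bc) = (a+d)^2 - 4$, which by the hyperbolicity hypothesis $a+d \in \R \setminus [-2,2]$ is a positive real number. Hence the quadratic has two distinct roots $\alpha, \beta$, and these are the only finite fixed points of $g$ (since $c \neq 0$, the point $\infty$ maps to $a/c \neq \infty$, so it is not a fixed point).

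Next I would compute the multipliers. Using $ad - bc = 1$, one checks $g'(z) = 1/(cz+d)^2$. Setting $u = c\alpha + d$ and $v = c\beta + d$, Vieta's formulas applied to $cz^2 + (d-a)z - b = 0$ give $\alpha + \beta = (a-d)/c$ and $\alpha\beta = -b/c$, from which a short calculation yields
\begin{align*}
u + v &= c(\alpha+\beta) + 2d = a + d, \\
uv &= c^2\alpha\beta + cd(\alpha+\beta) + d^2 = -bc + ad = 1.
\end{align*}
Therefore $u$ and $v$ are the roots of $t^2 - (a+d)t + 1 = 0$.

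Since $a+d$ is real and $(a+d)^2 - 4 > 0$, both $u$ and $v$ are real, nonzero, and satisfy $uv = 1$. The roots are $\tfrac{1}{2}\bigl((a+d) \pm \sqrt{(a+d)^2 - 4}\bigr)$; one has absolute value strictly greater than $1$ and the other strictly less than $1$ (they are both positive when $a+d > 2$ and both negative when $a+d < -2$, but in either case their moduli are reciprocals different from $1$). Consequently $g'(\alpha) = 1/u^2$ and $g'(\beta) = 1/v^2$ satisfy $|g'(\alpha)| \cdot |g'(\beta)| = 1$ with exactly one of them less than $1$ in modulus, yielding one attracting and one repelling fixed point.

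The argument is essentially a direct computation, so there is no real obstacle; the only point worth highlighting is the reduction through the substitution $u = c\alpha + d$, $v = c\beta + d$, which cleanly converts the statement about multipliers into the statement that the roots of $t^2 - (\operatorname{tr} g) t + 1 = 0$ lie off the unit circle precisely when $\operatorname{tr}(g) \in \R \setminus [-2,2]$.
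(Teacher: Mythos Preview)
Your proof is correct and follows essentially the same route as the paper: solve the fixed-point quadratic, compute $g'(z)=1/(cz+d)^2$, use Vieta to obtain $(c\alpha+d)(c\beta+d)=1$, and then identify $c\alpha+d$ and $c\beta+d$ as the (real) roots of $t^2-(a+d)t+1=0$ to conclude one multiplier lies inside and the other outside the unit circle. The only cosmetic difference is that the paper fixes $a+d>2$ without loss of generality and works with the explicit root, whereas you phrase the last step via the auxiliary quadratic and treat both sign cases at once.
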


\begin{proof}
The fixed points of $ g $ are the roots of the quadratic equation
$$ cz^2 - (a-d)z - b =0 $$
Denote the fixed points of $ g $ as follows
\begin{align} \label{eq-fixed points of g}
\alpha = \frac{a-d + \sqrt{(a+d)^2 - 4}}{2c} \quad \textrm{and} \quad \beta = \frac{a-d - \sqrt{(a+d)^2 - 4}}{2c}. 
\end{align}
Observe that $ \alpha + \beta = \frac{a-d}{c} $ and $ \alpha \beta = -\frac{b}{c} $. Thus we have the following equation
\begin{align} \label{eq-ad-bc is 1}
 \nonumber
(c\alpha +d)(c\beta +d) &= c^2\alpha\beta + cd (\alpha + \beta ) + d^2 \\ \nonumber
&= -bc + d(a-d) +d^2 \\ \nonumber
&= -bc +ad \\
&=1
\end{align}
Since $ g $ is the hyperbolic M\"obius map, that is, $ a+d > 2 $ or $ a+d < -2 $, without loss of generality we may assume that $ a+d >2 $. Then we obtain the following inequality using the equation \eqref{eq-fixed points of g}
\begin{align} \label{eq-c alpha plus d}
c\alpha + d = \frac{a+d + \sqrt{(a+d)^2 - 4}}{2} > \frac{a+d}{2} > 1.
\end{align}
Since $ g'(z) = \frac{1}{(cz+d)^2} $ and by the equations \eqref{eq-ad-bc is 1} and \eqref{eq-c alpha plus d}, we obtain that $ g'(\alpha) = \frac{1}{(c\alpha+d)^2} < 1 $ and $ g'(\beta) = \frac{1}{(c\beta+d)^2} > 1 $.
\end{proof}

\smallskip

\begin{lem} \label{lem-congugation of hyp Mobius map}
Let $ g $ and $ h $ are M\"obius map as follows
$$
g(z) = \frac{az+b}{cz+d} \quad  \text{and} \quad h(z) = \frac{z-\beta}{z-\alpha}
$$
where $ \alpha $ and $ \beta $ are the fixed points of $ g $ and $ ad-bc =1 $. If $ \alpha \neq \beta $, then $ h \circ g \circ h^{-1} (w) = kw $ where $ k = \frac{1}{(c\beta +d)^2} $. In particular, if $ g $ is the hyperbolic M\"obius map and $ \beta $ is the repelling fixed point, then $ k>1 $. 
\end{lem}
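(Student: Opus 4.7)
The plan is to argue by pinning down the conjugated map via its fixed points, then extract $k$ from a derivative computation.

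First I would observe that $h$ is a Möbius map that sends $\beta \mapsto 0$ and $\alpha \mapsto \infty$. Consequently $h \circ g \circ h^{-1}$ is a Möbius map that fixes both $0$ and $\infty$. Any such map has the form $w \mapsto kw$ for some nonzero constant $k$, so the main content of the lemma reduces to identifying the value of $k$ and, in the hyperbolic case, showing $k > 1$.

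To compute $k$, I would use the fact that the derivative of a Möbius map at a fixed point is a conjugation invariant. Since $h$ is biholomorphic near $\beta$, the chain rule gives
\[
k = (h \circ g \circ h^{-1})'(0) = h'(\beta)\, g'(\beta)\, (h^{-1})'(0) = g'(\beta),
\]
because $h'(\beta)(h^{-1})'(0) = 1$. From $g'(z) = 1/(cz+d)^2$ this yields $k = 1/(c\beta + d)^2$ as required. (Alternatively one can just compute $h \circ g \circ h^{-1}(w)$ explicitly using $\alpha, \beta$ as roots of $cz^2 - (a-d)z - b = 0$ together with the identity $(c\alpha+d)(c\beta+d)=1$ proved in \eqref{eq-ad-bc is 1}; this would give $k$ by direct simplification, but the derivative argument avoids the bookkeeping.)

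For the final assertion, I need $k > 1$ as a real number, not merely $|k| > 1$. The key point is that although $a,b,c,d$ may be complex, the hypothesis forces $a+d \in \mathbb{R}$ with $(a+d)^2 > 4$, hence $\sqrt{(a+d)^2 - 4}$ is real. In parallel with the computation \eqref{eq-c alpha plus d}, one obtains
\[
c\beta + d = \frac{a+d - \sqrt{(a+d)^2 - 4}}{2},
\]
which is a real number. Assuming (as in the proof of Lemma \ref{lem-fixed points of hyp Mobius map}) that $a+d > 2$, this quantity lies in $(0,1)$; hence $k = 1/(c\beta+d)^2$ is a real number strictly greater than $1$. The case $a+d < -2$ is handled symmetrically, noting that $c\beta + d$ then lies in $(-1,0)$, so its square is again in $(0,1)$ and $k > 1$.

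The main obstacle, if any, is just the last step: one must be careful that $c\beta + d$ is real despite $c$ and $\beta$ being complex individually. This follows because $c\beta + d$ depends only on the trace $a+d$ via the explicit formula above, which is precisely the computation already carried out for $c\alpha + d$ in the previous lemma.
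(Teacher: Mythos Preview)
Your argument is correct, but it differs from the paper's in how $k$ is extracted. The paper also observes that $h\circ g\circ h^{-1}$ fixes $0$ and $\infty$ and hence has the form $w\mapsto kw$, but then evaluates at the specific point $w=1$: since $h^{-1}(1)=\infty$ and $g(\infty)=a/c$, one gets $k=h(a/c)=\frac{a-c\beta}{a-c\alpha}$, and this is simplified via the explicit formulas \eqref{eq-fixed points of g} and the identity \eqref{eq-ad-bc is 1} to $\frac{c\alpha+d}{c\beta+d}=\frac{1}{(c\beta+d)^2}$. Your route via the derivative at the fixed point is cleaner: it exploits directly that the multiplier of a Möbius map at a fixed point is a conjugation invariant, so $k=g'(\beta)$ drops out in one line with no algebraic manipulation. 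The paper's computation, on the other hand, makes the intermediate equality $k=\frac{c\alpha+d}{c\beta+d}$ visible, which is occasionally useful. For the final claim $k>1$, both arguments ultimately rest on the same real computation of $c\beta+d$ in terms of the trace $a+d$ carried out in Lemma~\ref{lem-fixed points of hyp Mobius map}.
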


\begin{proof}
The maps $ g $ and $ h $ are M\"obius map. Thus so is $ h \circ g \circ h^{-1} $. By the direct calculation, we obtain that $ h^{-1}(w) = \frac{\alpha w - \beta}{w - 1} $.  Observe that $ h^{-1}(0) = \beta $, $ h^{-1}(\infty) = \alpha $ and $ h^{-1}(1) = \infty $. Then we have
\begin{align*}
h \circ g \circ h^{-1}(0) &= h \circ g (\beta) = h(\beta) = 0 \\
h \circ g \circ h^{-1}(\infty) &= h \circ g (\alpha) = h(\alpha) = \infty
\end{align*}
The points $ 0 $ and $ \infty $ are fixed points of $ h \circ g \circ h^{-1} $. So $ h \circ g \circ h^{-1} (w) = kw $ for some $ k \in \C $. Since $ k = h \circ g \circ h^{-1} (1) $, the following equation holds by \eqref{eq-fixed points of g} and \eqref{eq-ad-bc is 1}
\begin{align*}
k &= h \circ g \circ h^{-1} (1) = h \circ g(\infty) = h \left( \frac{a}{c} \right) & \\
&= \frac{\frac{a}{c} - \beta}{\frac{a}{c} - \alpha} = \frac{a - c\beta}{a - c\alpha} & 
\\
&= \frac{a + d + \sqrt{(a+d)^2 - 4}}{a + d - \sqrt{(a+d)^2 - 4}}  
\\
&= \frac{c\alpha +d}{c\beta + d} 
\\
&= \frac{1}{(c\beta +d)^2}.  
\end{align*}
%
If $ g $ is the hyperbolic M\"obius map, then $ k = \frac{1}{(c\beta +d)^2} = g'(\beta) > 1 $ by the proof of Lemma \ref{lem-fixed points of hyp Mobius map}. 
\end{proof}

\smallskip

\begin{lem} \label{lem-hyperbolic Mobius transformation}
Let $ g $ be the hyperbolic M\"obius map on $ \hat{\C} $. Let $ \alpha $ and $ \beta $ be the attracting and the repelling fixed point respectively. Then 
$$
\lim_{n \rightarrow \infty} g^n(z) \rightarrow \alpha \quad \text{as} \quad n \rightarrow +\infty
$$ 
for all $ z \in \hat{\C} \setminus \{\beta \} $. 
\end{lem}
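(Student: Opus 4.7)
\medskip

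\noindent\textbf{Proof plan.} The natural route is to transfer the dynamics of $g$ to the much simpler linear map $m_k(w) = kw$ using the conjugacy from Lemma \ref{lem-congugation of hyp Mobius map}. That lemma provides a M\"obius map $h(z) = \frac{z-\beta}{z-\alpha}$ with $h(\beta) = 0$, $h(\alpha) = \infty$, such that $h \circ g \circ h^{-1}(w) = kw$ with $k = \frac{1}{(c\beta+d)^2} > 1$. Iterating this conjugacy gives $g^n = h^{-1} \circ m_k^n \circ h$, so for every $z \in \hat{\C}$ we have $g^n(z) = h^{-1}\!\bigl( k^n h(z) \bigr)$.

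Now fix $z \in \hat{\C} \setminus \{\beta\}$. If $z = \alpha$, then $g^n(z) = \alpha$ for every $n$ and there is nothing to prove, so assume $z \neq \alpha,\beta$. Since $h$ is a bijection on $\hat{\C}$ with $h(\beta) = 0$, we have $h(z) \neq 0$. Because $k > 1$, the sequence $k^n h(z)$ tends to $\infty$ in the spherical metric on $\hat{\C}$.

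Finally, $h^{-1}$ is a M\"obius map and therefore continuous on $\hat{\C}$ with respect to the spherical metric, and $h^{-1}(\infty) = \alpha$. Passing the limit through $h^{-1}$ yields
\[
\lim_{n \to \infty} g^n(z) \;=\; \lim_{n \to \infty} h^{-1}\!\bigl( k^n h(z) \bigr) \;=\; h^{-1}(\infty) \;=\; \alpha,
\]
which is the desired conclusion.

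\medskip

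\noindent\textbf{Anticipated difficulty.} There is no real technical obstacle: once one accepts Lemma \ref{lem-congugation of hyp Mobius map}, the argument is essentially the observation that a linear contraction/expansion has globally attracting and repelling fixed points. The only point that deserves a careful word is working on the Riemann sphere $\hat{\C}$ (so that $h^{-1}(\infty) = \alpha$ makes sense and $h^{-1}$ is continuous there); this is already built into the statement since the lemma is phrased on $\hat{\C}$.
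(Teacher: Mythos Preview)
Your proof is correct and follows essentially the same route as the paper: conjugate $g$ to the dilation $w\mapsto kw$ via $h(z)=\frac{z-\beta}{z-\alpha}$, observe that $k^n h(z)\to\infty$ whenever $h(z)\neq 0$, and pull back through $h^{-1}$ to conclude $g^n(z)\to\alpha$. Your explicit mention of continuity of $h^{-1}$ in the spherical metric is a small refinement over the paper's argument, which simply asserts the orbit correspondence without naming the topology.
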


\begin{proof}
By the classification of M\"obius map, the hyperbolic M\"obius map has both the attracting and the repelling fixed points. Let $ h $ be the linear fractional map as follows
$$
h(z) = \frac{z - \beta}{z - \alpha} .
$$
Then $ f = h \circ g \circ h^{-1} $ is the dilation with the repelling fixed point at zero, that is, $ f(w) = kw $ for $ k > 1 $. Thus  $ 0 $ is the repelling fixed point of $ f $. Since $ h $ is a bijection on $ \hat{\C} $, the orbit, $ \{ g^n(z) \}_{n \in \Z} $ corresponds to the orbit, $ \{ f^n(h(z)) \}_{n \in \Z} $ by conjugation $ h $. Observe that 
$$
f^n(z) \rightarrow \infty \quad \text{as} \quad n \rightarrow +\infty
$$
for all $ z \in \hat{\C} \setminus \{ 0 \} $. Hence, 
$$
g^n(z) \rightarrow \alpha \quad \text{as} \quad n \rightarrow +\infty
$$
for all $ z \in \hat{\C} \setminus \{ \beta \} $. 
\end{proof}

\smallskip

\begin{cor}
Let $ g $ be the map defined in Lemma \ref{lem-hyperbolic Mobius transformation}. Then 
$$
\lim_{n \rightarrow \infty} g^{-n}(z) \rightarrow \beta \quad \text{as} \quad n \rightarrow +\infty
$$ 
for all $ z \in \hat{\C} \setminus \{\alpha \} $.
\end{cor}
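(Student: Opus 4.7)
The plan is to deduce the corollary by applying the preceding lemma to the inverse map $g^{-1}$, since iterating $g^{-1}$ forward is the same as iterating $g$ backward. So the whole task reduces to verifying that $g^{-1}$ satisfies the hypotheses of Lemma \ref{lem-hyperbolic Mobius transformation} and identifying which fixed point plays the attracting role for $g^{-1}$.

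First I would write down $g^{-1}$ explicitly: if $g(z) = \frac{az+b}{cz+d}$ with $ad-bc = 1$ and $c \neq 0$, then $g^{-1}(z) = \frac{dz - b}{-cz + a}$, which again has determinant $1$ and nonzero lower-left entry. Its trace is $d + a = \mathrm{tr}(g) \in \R \setminus [-2,2]$, so $g^{-1}$ is itself a hyperbolic M\"obius map in the sense of the definition in Section 2, and Lemma \ref{lem-hyperbolic Mobius transformation} applies to it.

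Next I would identify the attracting and repelling fixed points of $g^{-1}$. Clearly $g^{-1}$ fixes exactly the points fixed by $g$, namely $\alpha$ and $\beta$. Differentiating the identity $g(g^{-1}(z)) = z$ gives $(g^{-1})'(p) = 1/g'(p)$ at any common fixed point $p$, so $|(g^{-1})'(\beta)| = 1/|g'(\beta)| < 1$ and $|(g^{-1})'(\alpha)| = 1/|g'(\alpha)| > 1$. Hence $\beta$ is the attracting fixed point of $g^{-1}$ and $\alpha$ is its repelling fixed point.

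Finally I would apply Lemma \ref{lem-hyperbolic Mobius transformation} to $g^{-1}$ with the roles of the fixed points swapped: for every $z \in \hat{\C} \setminus \{\alpha\}$ one has $(g^{-1})^n(z) \to \beta$ as $n \to +\infty$. Since $(g^{-1})^n = g^{-n}$, this is exactly the claim. No serious obstacle is expected here; the only thing worth being careful about is to record why $g^{-1}$ qualifies as a hyperbolic M\"obius map (same trace), so that the previous lemma legitimately applies.
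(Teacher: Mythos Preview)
Your proposal is correct and follows essentially the same approach as the paper: observe that $g^{-1}$ is again a hyperbolic M\"obius map with attracting fixed point $\beta$ and repelling fixed point $\alpha$, then apply Lemma~\ref{lem-hyperbolic Mobius transformation} to $g^{-1}$. Your version is in fact more careful than the paper's, since you explicitly verify the trace condition and the derivative computation that swap the roles of the fixed points.
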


\begin{proof}
Observe that $ g^{-1} $ is also hyperbolic M\"obius transformation and $ \beta $ and $ \alpha $ are the attracting and the repelling fixed point under $ g^{-1} $ respectively. Thus we apply the proof of Lemma \ref{lem-hyperbolic Mobius transformation} to the map $ g^{-1} $. It completes the proof.  
\end{proof}
\smallskip
We collect the notions throughout this paper as follows
\begin{itemize}
\item The M\"obius map $ g $ is the hyperbolic M\"obius map  and $ g(z) = \frac{az + b}{cz +d} $ where $ ad -bc =1 $ and $ c \neq 0 $.
\item The M\"obius map $ h $ is defined as $ h(z) = \frac{z - \beta}{z - \alpha} $ where $ \alpha $ and $ \beta $ are the attracting and the repelling fixed point of $ g $.
\item Without loss of generality, we may assume that the hyperbolic M\"obius map $ g $ has the matrix representation with $ \mathrm{tr}(g) > 2 $. 
\item Since the trace of the matrix is invariant under conjugation, we obtain that $ \mathrm{tr}(g) = \mathrm{tr}(h \circ g \circ h^{-1}) $. By Lemma \ref{lem-congugation of hyp Mobius map}, if $ \mathrm{tr}(g) >2 $, then  
\begin{align*}
\mathrm{tr}(g) = \mathrm{tr}
\begin{pmatrix}
\sqrt{k} & 0 \\ 
0 & \frac{1}{\sqrt{k}}
\end{pmatrix} 
 = \sqrt{k} + \frac{1}{\sqrt{k}} > 2 .
\end{align*}

\end{itemize}

\smallskip

\section{Hyers-Ulam stability on the exterior of disk}




%
%
Let $ F $ be the function from $ \N_0 \times \C $ to $ \C $. Suppose that for a given positive number $ \e $, a complex valued sequence $ \{ a_n \}_{n \in \N_0 } $ satisfies the inequality
$$
| a_{i+1} - F(i,a_i) | \leq \e
$$
for all $ i \in \N_0 $ where $ | \cdot | $ is the absolute value of complex number. If there exists the sequence $ \{ b_n \}_{n \in \N_0 } $ which satisfies that 
$$ b_{i+1} = F(i,b_i) $$
for each $ i \in \N_0 $,
and $ |a_i - b_i | \leq G(\e) $ for all $ i \in \N_0 $ where the positive number $ G(\e) $ converges to zero as $ \e \rightarrow 0 $,  then we say that the sequence $ \{ b_n \}_{n \in \N_0 } $ has {\em Hyers-Ulam stability}. Denote $ F(i, z) $ by $ F_i(z) $ if necessary. 
%
%

\medskip

%
%
%
%
%
%
%
%
%
%

The set $ S $ is called an invariant set under $ F $ (or $ S $ is invariant under $ F $) where for any $ s \in S $ we obtain that $ F(i,s) \in S $ for all $ i \in \N_0 $. 

\begin{lem} \label{lem-hyers ulam stability with contraction}
Let $ F \colon \N_0 \times \C \rightarrow \C $ be a function satisfying the condition
\begin{align} \label{eq-condition of F}
|F(i,u) - F(i,v)| \leq K|u-v|
\end{align}
for all $ i \in \N_0 $, $ u,v \in \C $ and for $ 0 < K < 1 $. For a given an $ \e > 0 $ suppose that the complex valued sequence $ \{a_i \}_{i \in \N_0} $ satisfies the inequality 
\begin{align}  \label{eq-sequence a-n}
|a_{i+1} - F(i,a_i)| \leq \e 
\end{align}
for all $ i \in \N_0 $. Then there exists a sequence $ \{b_i \}_{i \in \N_0} $ satisfying 
\begin{align}   \label{eq-sequence b-n}
b_{i+1} = F(i,b_i) 
\end{align}
and
\begin{align*}
|b_i - a_i| \leq K^i|b_0 - a_0| + \frac{1-K^i}{1-K} \,\e
\end{align*}
for $ i \in \N_0 $. If the whole sequence $ \{a_i \}_{i \in \N_0} $ is contained in the invariant set $ S \subset \C $ under $ F $ 
, then $ \{b_i \}_{i \in \N_0} $ is also in $ S $ under the condition, $ a_0 = b_0 $.
\end{lem}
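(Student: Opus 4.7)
The approach is the standard contraction-mapping argument for Hyers-Ulam stability: construct the exact sequence $\{b_i\}$ recursively from an arbitrary initial value $b_0$, then compare it term by term with $\{a_i\}$ using the triangle inequality and the Lipschitz condition \eqref{eq-condition of F}.

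First I would define $\{b_i\}_{i \in \N_0}$ inductively by prescribing $b_0 \in \C$ and setting $b_{i+1} = F(i,b_i)$ for each $i \ge 0$. This definition gives \eqref{eq-sequence b-n} automatically. To estimate $|b_{i+1} - a_{i+1}|$, I would insert $F(i,a_i)$ in the middle and split via the triangle inequality:
\begin{align*}
|b_{i+1} - a_{i+1}| \le |F(i,b_i) - F(i,a_i)| + |F(i,a_i) - a_{i+1}|.
\end{align*}
By hypothesis \eqref{eq-condition of F} the first term is bounded by $K|b_i - a_i|$, and by \eqref{eq-sequence a-n} the second term is bounded by $\e$. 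Hence
\begin{align*}
|b_{i+1} - a_{i+1}| \le K|b_i - a_i| + \e.
\end{align*}

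Next I would iterate this one-step bound to obtain the closed form stated. A straightforward induction on $i$, using $1 + K + K^2 + \cdots + K^{i-1} = \frac{1-K^i}{1-K}$, yields
\begin{align*}
|b_i - a_i| \le K^i|b_0 - a_0| + \frac{1-K^i}{1-K}\,\e
\end{align*}
for every $i \in \N_0$. No issue arises with convergence of the geometric sum because $0 < K < 1$, so the estimate is in fact uniformly bounded by $|b_0 - a_0| + \e/(1-K)$.

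For the final assertion, I would use the hypothesis $a_0 = b_0$ together with the invariance of $S$. Since $\{a_i\} \subset S$ forces $a_0 \in S$ and hence $b_0 \in S$, the recursive definition $b_{i+1} = F(i,b_i)$ combined with the invariance property $F(i,s) \in S$ for all $s \in S$ and $i \in \N_0$ gives $b_i \in S$ for every $i$ by induction. There is no real obstacle here; the only thing to be careful about is the initial step of the recursion for the invariance claim, which is exactly why the hypothesis $a_0 = b_0$ is imposed.
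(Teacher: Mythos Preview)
Your proposal is correct and follows essentially the same approach as the paper: both argue by induction, inserting $F(i,a_i)$ (respectively $F(i-1,a_{i-1})$) via the triangle inequality, applying the Lipschitz bound \eqref{eq-condition of F} and the error bound \eqref{eq-sequence a-n}, and then summing the resulting geometric series. The invariance clause is also handled identically, via $b_0=a_0\in S$ and the forward invariance of $S$.
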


\begin{proof}
By induction suppose that 
$$
| b_{i-1} - a_{i-1} | \leq K^{i-1} |b_0 -a_0| + \frac{1-K^{i-1}}{1-K} \,\e .
$$
If $ i=0 $, then trivially $ |b_0 - a_0| \leq \e $. Morover,
\begin{align*}
|b_i - a_i| & \leq | b_i - F(i-1,a_{i-1})| + | a_i - F(i-1,a_{i-1})| \\[0.2em]
 & \leq | F(i-1,b_{i-1}) - F(i-1,a_{i-1})| + | a_i - F(i-1,a_{i-1})| \\[0.2em]
 & = K | b_{i-1} - a_{i-1} | + \e \\
 & \leq K \left\{ K^{i-1} |b_0 -a_0| + \frac{1-K^{i-1}}{1-K} \,\e \right\} + \e \\
 &= K^{i} |b_0 -a_0| + \frac{1-K^{i}}{1-K} \,\e .
\end{align*}
Moreover, if $ a_0 = b_0 $, then the sequence $ \{b_i \}_{i \in \N_0} $ satisfies the inequality \eqref{eq-sequence a-n} without error under $ F $. Hence, $ \{b_i \}_{i \in \N_0} $ is contained in the invariant set $ S $. 
\end{proof}


%
%
%
%



The real version of the following lemma is proved in \cite{jungnam1} as $ g $ is the map defined on the real line. 
%
%

%
\begin{figure}
    \centering
    \includegraphics[scale=0.85]{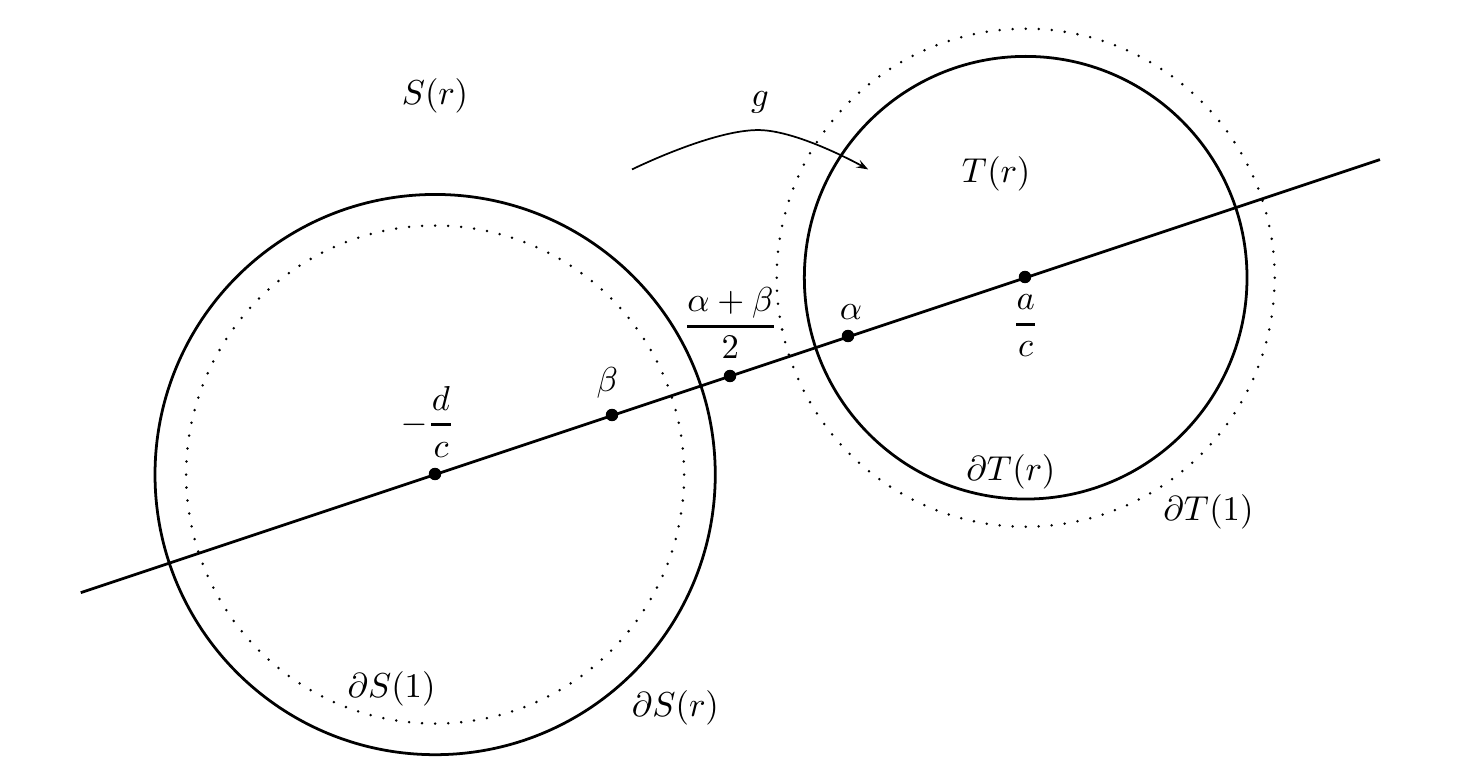}   
    \caption{Image of the disk under hyperbolic M\"obius map}
\end{figure}

\begin{lem} \label{lem-invariant region of Mobius}
Let $ g $ be the M\"obius map $ g(z) = \frac{az+b}{cz+d} $
where $ a $, $ b $, $ c $ and $ d $ are complex numbers, $ ad - bc = 1 $ and $ c \neq 0 $. Let the region $ S(r) $ and $ T(r) $ be as follows
\begin{align*}
S(r) = \left\{ z \in \C \colon \left| z + \frac{d}{c} \right| > \frac{r}{|c|} \right\}, \quad
T(r) = \left\{ z \in \C \colon \left| z - \frac{a}{c} \right| < \frac{1}{r |c|} \right\} 
\end{align*}
for $ r > 0 $. Then $ g (S(r)) = T(r)\setminus \left\{ \dfrac{a}{c}  \right\} $ for any $ r > 0 $. Moreover, if $ g $ is hyperbolic M\"obius map and $ r + \frac{1}{r} < | \mathrm{tr}(g) | $, then the closure of $ T(r) $ is contained in $ S(r) $. 
\end{lem}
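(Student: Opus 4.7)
The plan is to reduce both assertions to a single algebraic identity. Using $ad-bc=1$, a direct computation gives
\[
g(z) - \frac{a}{c} \;=\; \frac{az+b}{cz+d} - \frac{a}{c} \;=\; \frac{bc-ad}{c(cz+d)} \;=\; -\,\frac{1}{c^{2}\bigl(z + \tfrac{d}{c}\bigr)},
\]
valid for $z \in \C \setminus \{-d/c\}$. Taking absolute values produces the key relation
\[
\Bigl| g(z) - \tfrac{a}{c} \Bigr| \cdot \Bigl| z + \tfrac{d}{c} \Bigr| \;=\; \frac{1}{|c|^{2}},
\]
which couples the two distances appearing in the definitions of $S(r)$ and $T(r)$.

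For the first assertion, I would read off from this identity that $|z+d/c|>r/|c|$ holds if and only if $|g(z)-a/c|<1/(r|c|)$. Thus $g$ maps $S(r)$ into $T(r)$, and the image avoids $a/c$ since $g^{-1}(a/c)=\infty \notin S(r)$. Conversely, $g$ is a bijection of $\hat{\C}$ with $g^{-1}(\infty)=-d/c$, so any $w \in T(r)\setminus\{a/c\}$ has a unique preimage $z \in \C \setminus \{-d/c\}$, and applying the identity in reverse places $z \in S(r)$. This gives the set equality $g(S(r)) = T(r)\setminus\{a/c\}$.

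For the second assertion, take $z \in \overline{T(r)}$, i.e., $|z-a/c|\le 1/(r|c|)$. Writing $z + d/c = (z-a/c) + (a+d)/c$ and applying the reverse triangle inequality,
\[
\Bigl| z + \tfrac{d}{c} \Bigr| \;\ge\; \frac{|a+d|}{|c|} - \Bigl| z - \tfrac{a}{c} \Bigr| \;\ge\; \frac{|a+d| - 1/r}{|c|}.
\]
The hypothesis $r + 1/r < |\mathrm{tr}(g)| = |a+d|$ rearranges to $|a+d| - 1/r > r$, so $|z+d/c| > r/|c|$, i.e., $z \in S(r)$. Since the strict inequality is preserved for every $z$ in the closed disk, we conclude $\overline{T(r)} \subset S(r)$.

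The proof is essentially one clean algebraic identity plus a triangle inequality, so there is no serious obstacle. The only point requiring care is ensuring the reverse triangle inequality is applied in the correct direction, which in turn requires $|a+d|/|c| \ge 1/(r|c|)$; this is automatic because the hypothesis gives $|a+d| > r + 1/r > 1/r$. The rest is bookkeeping with strict versus non-strict inequalities and recalling that $g$ is a bijection of $\hat{\C}$.
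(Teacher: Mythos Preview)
Your proof is correct and follows essentially the same approach as the paper: both derive the identity $\bigl|g(z)-\tfrac{a}{c}\bigr|\cdot\bigl|z+\tfrac{d}{c}\bigr|=1/|c|^{2}$ to establish the set equality, and both prove $\overline{T(r)}\subset S(r)$ by writing $z+\tfrac{d}{c}=(z-\tfrac{a}{c})+\tfrac{a+d}{c}$ and applying the reverse triangle inequality together with $|a+d|>r+\tfrac{1}{r}$. Your write-up is in fact tidier, since the paper's version contains a few typos in the second part (it writes $\tfrac{r}{|c|}$ where $\tfrac{1}{r|c|}$ is meant, and concludes $|z+\tfrac{d}{c}|>\tfrac{1}{r|c|}$ where $>\tfrac{r}{|c|}$ is intended).
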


\begin{proof}
The set $ S(r) $ is contained in $ \C $ and $ g(\infty) = \frac{a}{c} $. Then $ S(r) $ does not have $ \frac{a}{c} $. 
The equation $ g (S(r)) = T(r)\setminus \left\{ \dfrac{a}{c}  \right\} $ for any $ r > 0 $ is shown by the following equivalent conditions
\begin{align*}
 g(z) \in T(r)\setminus \left\{ \dfrac{a}{c}  \right\} & \Longleftrightarrow 0 < \left| \frac{az+b}{cz+d} - \frac{a}{c} \right| < \frac{1}{r|c|}  \\
 & \Longleftrightarrow 0 < \left| \frac{ad - bc}{cz^2 + cd} \right| < \frac{1}{r|c|} \\
 & \Longleftrightarrow 0 < \frac{1}{\left| z + \frac{d}{c} \right|} < \frac{|c|}{r} \\ 
 & \Longleftrightarrow \left| z + \frac{d}{c} \right| > \frac{r}{|c|} \\[0.2em]
 & \Longleftrightarrow \ z \in S(r) .
\end{align*}
Additionally, suppose that $ g $ is the hyperbolic map and $ r + \frac{1}{r} < | \mathrm{tr}(g)| $. The closure of $ T(r) $ is the set of points satisfying that $ \left| z - \frac{a}{c} \right| \leq \frac{1}{r|c|} $. Then for any $ z $ in the closure of $ T(r) $, we obtain that 
\begin{align*}
| cz + d| &= | cz -a + a+d | \\
& \geq - |cz -a| + |a+d| \\
& = -|c| \cdot \left| z - \frac{a}{c} \right| + | \mathrm{tr}(g) | \\
& > -|c| \cdot \frac{r}{|c|} + r + \frac{1}{r} \\
& = \frac{1}{r}
\end{align*}
Then we have $ \left| z + \frac{d}{c} \right| > \frac{1}{r|c|} $, that is, $ z \in S(r) $. Hence, the closure of $ T(r) $ is contained in $ S(r) $ if $ r + \frac{1}{r} < | \mathrm{tr}(g) | $.
%
%
%
%
%
\end{proof}

%
\begin{prop} \label{prop-stability on S1+t}
Let $ g $ be the hyperbolic M\"obius map with $ \mathrm{tr}(g) = 2 + \tau $ for $ \tau > 0 $. Let $ S(r) $ be the region defined in Lemma \ref{lem-invariant region of Mobius}. For a given $ \e > 0 $, let a complex valued sequence $ \{ a_i \}_{i \in \N_0 } $ satisfies the inequality
$$
| a_{i+1} - g(a_i) | \leq \e
$$
for all $ i \in \N_0 $. 
 Suppose that $ \e < \frac{t}{|c|(1 + t)} $ and $ a_0 $ is in $ S(1+t) $ for $ 0 < t \leq \tau $. Then the sequence $ \{ a_n \}_{n \in \N_0} $ is contained in $ S(1+t) $. Moreover, there exists the sequence $ \{ b_i \}_{i \in \N_0 } $ satisfying 
$$ b_{i+1} = g(b_i) $$
for each $ i \in \N $ has Hyers-Ulam stability where $ b_0 = a_0 $. 
\end{prop}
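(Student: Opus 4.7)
My plan is to reduce the statement to Lemma \ref{lem-hyers ulam stability with contraction} by establishing two ingredients: the perturbed orbit $\{a_i\}$ never leaves $S(1+t)$, and $g$ acts as a strict contraction on $S(1+t)$.

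For the invariance, I would apply Lemma \ref{lem-invariant region of Mobius} with $r = 1+t$. The hypothesis $(1+t) + 1/(1+t) < \mathrm{tr}(g) = 2+\tau$ rearranges to $1/(1+t) < 1 + (\tau - t)$, which holds because $t > 0$ and $\tau \geq t$; hence $g(S(1+t)) \subset T(1+t)$ and $\overline{T(1+t)} \subset S(1+t)$. To propagate this under an $\e$-perturbation I would quantify the second containment by computing the Euclidean gap between $\overline{T(1+t)}$ and the complementary disk $\C \setminus S(1+t)$. These are disks centered at $a/c$ and $-d/c$, with radii $1/((1+t)|c|)$ and $(1+t)/|c|$; the centers are separated by $|a+d|/|c| = (2+\tau)/|c|$. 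Subtracting, the gap equals
\[
\frac{(2+\tau) - (1+t) - 1/(1+t)}{|c|} \;=\; \frac{(\tau - t) + t/(1+t)}{|c|} \;\geq\; \frac{t}{(1+t)|c|},
\]
which is precisely the upper bound assumed for $\e$. Therefore, whenever $a_i \in S(1+t)$, we have $g(a_i) \in T(1+t)$ and the $\e$-perturbation $a_{i+1}$ still lies in $S(1+t)$; induction closes the invariance step.

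For the contraction I would use the elementary identity $g(u) - g(v) = (u-v)/\bigl((cu+d)(cv+d)\bigr)$, which follows from $ad - bc = 1$ and crucially does not require $S(1+t)$ to be convex. On $S(1+t)$ the denominator factors satisfy $|cz+d| > 1+t$, so
\[
|g(u) - g(v)| \;<\; \frac{|u-v|}{(1+t)^2}
\]
for all $u, v \in S(1+t)$. Thus $g$ restricted to $S(1+t)$ satisfies the hypothesis of Lemma \ref{lem-hyers ulam stability with contraction} with Lipschitz constant $K = (1+t)^{-2} < 1$. Taking $b_0 = a_0$ and applying that lemma then produces a true orbit $\{b_i\}$ with $|b_i - a_i| \leq \frac{1-K^i}{1-K}\,\e$, which is the Hyers-Ulam bound; the same lemma also confirms $\{b_i\} \subset S(1+t)$.

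The main technical point is the invariance step. Because $S(1+t)$ is the exterior of a disk and therefore not convex, one cannot control how far $a_{i+1}$ may wander from $S(1+t)$ by integrating $g'$ along a line segment from $a_i$. The key idea bypassing this is already packaged in Lemma \ref{lem-invariant region of Mobius}: $g$ compresses the entire exterior region $S(1+t)$ into the interior disk $T(1+t)$, so the question of $\e$-invariance collapses to a straightforward disk-to-disk distance comparison, and this is the source of the sharp threshold $\e < t/((1+t)|c|)$.
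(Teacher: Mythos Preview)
Your proof is correct and follows the same two-step structure as the paper: first show by induction that $\{a_i\}$ remains in $S(1+t)$, then feed the contraction estimate with $K=(1+t)^{-2}$ into Lemma~\ref{lem-hyers ulam stability with contraction}. The invariance computation is identical in content---the paper writes it as a chain of triangle inequalities on $|a_i + d/c|$ using Lemma~\ref{lem-invariant region of Mobius}, you phrase it as a disk-to-disk gap, but both reduce to the same arithmetic $(2+\tau) - (1+t) - 1/(1+t) \geq t/(1+t)$.

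One point where your argument is actually cleaner: the paper bounds $|g'|$ on $S(1+t)$ and then asserts that $g$ is Lipschitz with constant $1/(1+t)^2$, which is delicate since $S(1+t)$ is the exterior of a disk and hence not convex. Your explicit identity $g(u)-g(v) = (u-v)/\bigl((cu+d)(cv+d)\bigr)$ bypasses any mean-value or path-length issue and yields the Lipschitz bound directly, so this is a genuine improvement over the paper's presentation.
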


\begin{proof}
For the map 
$$ g(z) = \frac{az+b}{cz+d} $$
we may assume that $ ad-bc$ $  =1 $. Recall that $ g'(z) = \frac{1}{(cz+d)^2} $. Thus $ |g'| $ has a uniform upper bound in $ S(1+t) $ as follows 
\begin{align} \label{eq-contraction in S under hyp}
z \in S(1 + t) & \Longleftrightarrow \left| z + \frac{d}{c} \right| > \frac{1+t}{|c|} \nonumber \\
& \Longleftrightarrow |cz + d| > 1 + t \nonumber \\
& \Longleftrightarrow |g'(z)| = \frac{1}{|cz+d|^2} < \frac{1}{(1+ \tau)^2} < 1 .
\end{align}
\textit{claim} $ \colon $ If $ a_0 \in S(1+\tau) $ and $ \e < \frac{\tau}{|c|(1 + \tau)} $, then the whole sequence $ \{ a_i \}_{i \in \N_0 } $ is also contained in $ S(1+\tau) $. By induction assume that $ a_{i-1} \in S(1+\tau) $. Then 
\begin{align*}
\left| a_i - \left( -\frac{d}{c} \right) \right| &= \left| a_i - \frac{a}{c} + \frac{a}{c} -\frac{d}{c} \right| \\
& \geq - \left| a_i - \frac{a}{c} \right| + \left| \frac{a+d}{c} \right| \\
& \geq - \left| g(a_{i-1}) - \frac{a}{c} \right| - \e + \frac{2+t}{|c|} \\
& > - \frac{1}{|c|(1 + \tau)} - \e + \frac{2+t}{|c|} \qquad \qquad \textrm{by Lemma \ref{lem-invariant region of Mobius}} \\
& = \frac{1}{|c|} \left( 2 + t - \frac{1}{1+t} - \e \right) \\
& > \frac{1}{|c|} \left( 2 + t - \frac{1}{1+t} - \frac{t}{1 + t} \right) \\
& = \frac{1}{|c|} (1 + t) .
\end{align*}
Thus $ \left| a_i +\frac{d}{c} \right| > \frac{1+t}{|c|} $, that is, $ a_i \in S(1 + t) $. Then the whole sequence $ \{ a_i \}_{i \in \N_0 } $ is contained in $ S(1+\tau) $. \\
The inequality \eqref{eq-contraction in S under hyp} implies that $ g $ is the Lipschitz map with Lipschitz constant $ \frac{1}{(1+t)^2} $ in $ S(1 + t) $. Then Lemma \ref{lem-hyers ulam stability with contraction} implies that 
%
%
%
%
\begin{align*}
|b_i - a_i| \leq K^i|b_0 - a_0| + \frac{1 + K^i}{1 - K} \,\e
\end{align*}
where $ K = \dfrac{1}{(1+t)^2} < 1 $. Hence, the sequence $ \{ b_i \}_{i \in \N_0} $ has Hyers-Ulam stability where $ b_0 = a_0 $. 
\end{proof}

\smallskip

\section{Image of concentric circles under the conjugation $h$}
In this section, we show that the image of concentric circles of which center is $ -\frac{d}{c} $ under the map $ h $ defined as $ h(z) = \frac{z- \beta}{z- \alpha} $. Denote a circle in the complex plane by $ C $. Recall that the image of line or circle under M\"obius map is line or circle. Moreover, since M\"obius map is conformal, the end points of the diameter of $ C $ is mapped by $ h $ to the end points of the diameter of $ h(C) $. However, the image of the center of $ C $ is not in general the center of $ h(C) $. 
\begin{figure}
    \centering
    \includegraphics[scale=1]{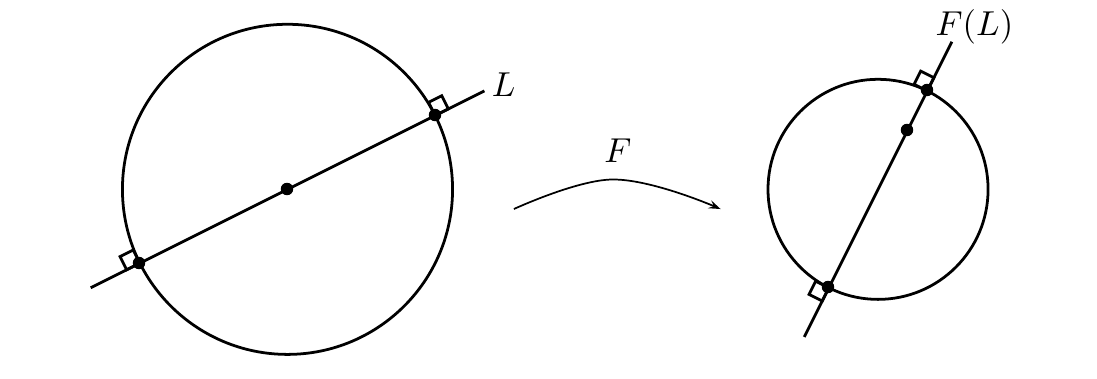}   
    \caption{Circle and line under M\"obius map}
\end{figure}
Recall the map $ f = h \circ g \circ h^{-1} $ is the dilation defined as $ f(w) = kw $ where $ k = \frac{1}{(c \beta + d)^2} > 1 $ by Lemma \ref{lem-congugation of hyp Mobius map}. 
%
%
%
%
%
Let $ L $ be the straight line in $ \C $. Define the {\em extended (straight) line} 
as $ L \cup \{ \infty \} $ and denote it by $ L_{\infty} $.
\smallskip
\begin{lem} \label{lem-image of -d/c under h}
Let $ h $ be the M\"obius map defined as $ h(z) = \frac{z- \beta}{z- \alpha} $. Then the image of $ -\frac{d}{c} $ under $ h $ as follows 
$$ h\left(-\frac{d}{c} \right) = \frac{1}{k} \ \  \textrm{and} \ \ -\frac{d}{c} = \frac{k \beta - \alpha}{k-1} . $$ 
\end{lem}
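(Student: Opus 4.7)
The plan is to compute $h(-d/c)$ directly from the definition $h(z) = (z-\beta)/(z-\alpha)$ and then identify the result with $1/k$ using the expression for $k$ established in Lemma \ref{lem-congugation of hyp Mobius map}. Once the first identity is in hand, the second identity is obtained by algebraically solving $h(-d/c) = 1/k$ for $-d/c$.

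Concretely, I would first substitute $z = -d/c$ into $h$ and clear denominators by multiplying numerator and denominator by $c$, producing
\begin{align*}
h\!\left(-\tfrac{d}{c}\right) = \frac{-\tfrac{d}{c} - \beta}{-\tfrac{d}{c} - \alpha} = \frac{-d - c\beta}{-d - c\alpha} = \frac{c\beta + d}{c\alpha + d}.
\end{align*}
From the chain of equalities in the proof of Lemma \ref{lem-congugation of hyp Mobius map} we already have $k = (c\alpha+d)/(c\beta+d)$, so the above ratio is exactly $1/k$. This establishes the first claim.

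For the second identity, I would start from $h(-d/c) = 1/k$ and cross-multiply: $k(-d/c - \beta) = -d/c - \alpha$. Rearranging to collect $d/c$ on the left gives $(d/c)(1-k) = k\beta - \alpha$, and dividing by $-(k-1)$ (which is nonzero since $k > 1$ by Lemma \ref{lem-congugation of hyp Mobius map}) yields
\begin{align*}
-\frac{d}{c} = \frac{k\beta - \alpha}{k-1},
\end{align*}
as desired.

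There is no real obstacle here: both identities are essentially one-line algebraic consequences of the formula for $h$ and the formula for $k$ proved in Lemma \ref{lem-congugation of hyp Mobius map}. The only thing to be slightly careful about is ensuring that division by $k-1$ is legitimate, which is guaranteed because $g$ is hyperbolic and hence $k > 1$ strictly.
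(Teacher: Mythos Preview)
Your argument is correct, but it proceeds differently from the paper's. The paper does not compute $h(-d/c)$ directly; instead it exploits the conjugation relation $f\circ h = h\circ g$ together with the facts $g(-d/c)=\infty$ and $h(\infty)=1$: from $f\bigl(h(-d/c)\bigr)=h\bigl(g(-d/c)\bigr)=h(\infty)=1=f(1/k)$ and the bijectivity of $f$ one reads off $h(-d/c)=1/k$. For the second identity the paper then applies the explicit formula $h^{-1}(w)=\frac{\alpha w-\beta}{w-1}$ at $w=1/k$. Your route is a bare-hands substitution into $h$ followed by quoting the intermediate identity $k=\frac{c\alpha+d}{c\beta+d}$ from the proof of Lemma~\ref{lem-congugation of hyp Mobius map}; this is slightly more elementary and avoids invoking the conjugacy, at the cost of relying on a line buried inside another proof rather than on the stated conclusion $k=1/(c\beta+d)^2$. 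The paper's approach, by contrast, makes transparent \emph{why} the value is $1/k$: it is the $f$-preimage of the point $h(\infty)=1$, which fits the dynamical picture used throughout the later sections.
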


\begin{proof}
The map $ h $ is the conjugation from $ g $ to $ f $ and $ h(\infty) = 1 $. The fact that $ f \circ h = h \circ g $ implies that
$$
f \circ h\left(-\frac{d}{c} \right) = h \circ g \left(-\frac{d}{c} \right) = h(\infty) = 1 = f \left(\frac{1}{k} \right) . 
$$
Since $ f $ is a bijection on $ \C $, $ h(-\frac{d}{c}) = \frac{1}{k} $. Observe that the map $ h^{-1}(w) = \frac{\alpha w - \beta}{w - 1} $. Hence, we have 
\begin{align} \label{eq-inverse image of -d/c}
-\dfrac{d}{c} = h^{-1}\left( \dfrac{1}{k} \right) = \dfrac{k \beta - \alpha}{k-1} .
\end{align}
\end{proof}

\begin{lem} \label{lem-invariance of extended line of g}
Let $ g $ be the hyperbolic M\"obius map $ g(z) = \frac{az + b}{cz + d} $ where $ ad-bc=1 $ and $ c \neq 0 $ with the attracting and repelling fixed points $ \alpha $ and $ \beta $ respectively. Denote the extended (straight) line which contains $ \alpha $ and $ \beta $ by  $ L_{\infty} = \{ \;t\alpha + (1-t)\beta \; \colon \;t \in \R \;\} \cup \{ \infty \} $. Then $ L_{\infty} $ is invariant under $ g $. In particular, $ g(L_{\infty}) = L_{\infty} $. 
\end{lem}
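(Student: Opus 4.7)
The plan is to conjugate by $h$ and reduce the invariance of $L_\infty$ under $g$ to the obvious invariance of the extended real axis under a real positive dilation. By Lemma \ref{lem-congugation of hyp Mobius map}, $f := h \circ g \circ h^{-1}$ is the dilation $f(w) = kw$ with $k = \frac{1}{(c\beta + d)^2}$. Since $h$ is a bijection of $\hat{\C}$, the equality $g(L_\infty) = L_\infty$ is equivalent to $f(h(L_\infty)) = h(L_\infty)$, so it suffices to identify $h(L_\infty)$ and check it is $f$-invariant.

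First I would compute $h(L_\infty)$ directly. For $t \in \R$,
\begin{equation*}
h\bigl(t\alpha + (1-t)\beta\bigr) = \frac{t\alpha + (1-t)\beta - \beta}{t\alpha + (1-t)\beta - \alpha} = \frac{t(\alpha-\beta)}{(t-1)(\alpha-\beta)} = \frac{t}{t-1},
\end{equation*}
together with $h(\infty) = 1$. As $t$ runs over $\R \cup \{\infty\}$, the values $t/(t-1)$ bijectively sweep $\R \cup \{\infty\}$, hence $h(L_\infty) = \R \cup \{\infty\}$, the extended real axis. (One could equally argue abstractly: $h$ sends the three points $\alpha, \beta, \infty$ lying on $L_\infty$ to $\infty, 0, 1$, and a M\"obius map carries a generalized circle to the unique generalized circle through its three image points.)

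Next I would verify that $k$ is a positive real number. From \eqref{eq-fixed points of g} and the hypothesis $a+d \in \R$ with $(a+d)^2 > 4$,
\begin{equation*}
c\beta + d = \frac{a+d - \sqrt{(a+d)^2 - 4}}{2} \in \R,
\end{equation*}
so $k = 1/(c\beta + d)^2$ is real, and Lemma \ref{lem-congugation of hyp Mobius map} moreover gives $k > 1$. Consequently $f(w) = kw$ preserves $\R \cup \{\infty\}$, and conjugating back by $h^{-1}$ yields $g(L_\infty) = L_\infty$.

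The only potentially delicate point is the reality of $k$: if $\mathrm{tr}(g)$ were merely complex with $|\mathrm{tr}(g)| > 2$, the multiplier $k$ could be non-real and $f$ would rotate as well as scale, destroying the invariance of $\R \cup \{\infty\}$. The standing hypothesis $\mathrm{tr}(g) \in \R \setminus [-2,2]$ is precisely what makes the argument go through; everything else is routine bookkeeping with the conjugation already established.
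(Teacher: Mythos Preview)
Your proof is correct but takes a different route from the paper's. The paper argues directly: from Lemma~\ref{lem-image of -d/c under h} it writes $-\frac{d}{c} = \frac{k}{k-1}\beta - \frac{1}{k-1}\alpha$, a real affine combination of $\alpha$ and $\beta$ (since $k>1$ is real), so $-\frac{d}{c} \in L_\infty$ and hence $\infty = g(-\tfrac{d}{c}) \in g(L_\infty)$; since $g(L_\infty)$ is then a generalized circle containing the three points $\alpha$, $\beta$, $\infty$, it must coincide with $L_\infty$. You instead transport the problem through the conjugation $h$: you compute $h(L_\infty) = \R \cup \{\infty\}$ explicitly via $h(t\alpha+(1-t)\beta)=t/(t-1)$ and observe that the real dilation $f(w)=kw$ preserves the extended real axis. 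Your argument has the side benefit of simultaneously establishing the Corollary that immediately follows the Lemma in the paper (namely $h(L_\infty)=\R\cup\{\infty\}$), which the paper derives separately; the paper's three-points argument, on the other hand, avoids any explicit parametrization. Both approaches ultimately rest on the reality of $k$, which you make explicit and which the paper has already recorded in Lemma~\ref{lem-congugation of hyp Mobius map}.
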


\begin{proof}
Lemma \ref{lem-image of -d/c under h} implies the following equation 
$$
- \frac{d}{c} = \frac{k}{k-1}\beta - \frac{1}{k-1} \alpha
$$
Thus $ -\frac{d}{c} $ is in the line segment which contains both $ \alpha $ and $ \beta $. Then $ -\frac{d}{c} \in L_{\infty} $. The fact that $ g\left(- \frac{d}{c} \right) = \infty $ implies that $ \infty $ is also contained in $ g(L_{\infty}) $. Recall that 
any non constant M\"obius map is bijective on $ \hat{\C} $.  Then $ g(L_{\infty}) $ is the extended line which contains $ \alpha $ and $ \beta $. Hence, the extended line $ L_{\infty} $ is invariant under $ g $ and $ g(L_{\infty}) = L_{\infty} $. 
\end{proof}

\begin{cor}
Let $ L_{\infty} $ be the extended line defined in Lemma \ref{lem-invariance of extended line of g}. Let $ h $ be the map defined as $ \frac{z-\beta}{z-\alpha} $ where $ \alpha $ and $ \beta $ are the attracting and the repelling fixed points of $ g $ respectively. Then $ h(L_{\infty}) $ is the extended real line, $ \R \cup \{ \infty \} $. 
\end{cor}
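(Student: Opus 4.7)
The plan is to exploit two standard facts about M\"obius maps: they send the family of circles and extended lines on $\hat{\C}$ to itself, and an extended line in $\hat{\C}$ is uniquely determined by any three of its points.

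First I would identify three specific points of $h(L_\infty)$ that already lie in $\R \cup \{\infty\}$. Because $\beta$ and $\alpha$ are both on $L_\infty$, we have $h(\beta) = 0$ and $h(\alpha) = \infty$. Moreover $\infty \in L_\infty$ by definition of the extended line, and a direct computation gives $h(\infty) = 1$. So the image $h(L_\infty)$ contains the three points $0$, $1$, and $\infty$, all of which are in $\R \cup \{\infty\}$.

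Next I would invoke the circle-preserving property: since $L_\infty$ is an extended line, $h(L_\infty)$ is either an extended line or a circle in $\C$. But $h(L_\infty)$ contains $\infty$ (because $h(\alpha) = \infty$ and $\alpha \in L_\infty$), so it cannot be a bounded circle in $\C$; hence $h(L_\infty)$ is an extended line. The extended line passing through $0$ and $1$ is exactly $\R \cup \{\infty\}$, so $h(L_\infty) \subseteq \R \cup \{\infty\}$. Applying the same argument to $h^{-1}$ (which is also a M\"obius map, with $h^{-1}(0)=\beta$, $h^{-1}(1)=\infty$, $h^{-1}(\infty)=\alpha$ all in $L_\infty$) gives the reverse inclusion, and bijectivity of $h$ on $\hat{\C}$ finishes the equality.

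There is really no obstacle here; the only care needed is to note that $h(L_\infty)$ must be a line rather than a bounded circle, which is immediate from $\infty \in h(L_\infty)$. This argument is purely elementary and relies only on standard properties of M\"obius maps already recorded in Section 1.
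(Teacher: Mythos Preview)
Your proof is correct and follows essentially the same approach as the paper's own proof: compute $h(\alpha)=\infty$, $h(\beta)=0$, $h(\infty)=1$, and use that a M\"obius map sends extended lines to circles or extended lines, so the image is the unique extended line through $0$, $1$, $\infty$, namely $\R\cup\{\infty\}$. Your version is simply a bit more explicit about why the image is a line rather than a bounded circle and about the reverse inclusion.
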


\begin{proof}
By the definition of $ h $, $ h(\alpha) = \infty $, $ h(\beta) = 0 $ and $ h(\infty) = 1 $. Hence, $ h(L_{\infty}) $ is the extended line which contains $ 0, 1 $ and $ \infty $. Hence, $ h(L_{\infty}) $ is the extended real line, $ \R \cup \{ \infty \} $. 
\end{proof}
%
%
%
%
%
Recall the set $ \partial S(r) $ is the circle of which center is $ -\frac{d}{c} $ with radius $ \frac{r}{|c|} $ for $ c \neq 0 $. The extended line $ L_{\infty} $ contains $ -\frac{d}{c} $. Thus for any two points $ p $ and $ q $ in $ L_{\infty} $, if the midpoint of the line segment connecting $ p $ and $ q $ is $ -\frac{d}{c} $, then $ p $ and $ q $ are the end points of the diameter of $ \partial S(r) $ for some $ r > 0 $. In the following lemma the image of the endpoints of the diameter of $ \partial S(r) $ for arbitrary radius. 
\begin{lem} \label{lem-image of the points in the line alpha and beta}
Let $ h $ be the map $ h(z) = \frac{z-\beta}{z - \alpha} $. 
Let $ t\alpha + (1-t)\beta $ be the point in $ L_{\infty} $ and denote it by $ p_t $ for $ t \in \R $. 
Then 
\begin{align*}
h(p_t) = \frac{t}{t-1} \quad \ \text{and} \quad \ h \left(-p_t - \frac{2d}{c} \right) = \frac{tk -t + 2}{tk - t + k +1} .
\end{align*}
\end{lem}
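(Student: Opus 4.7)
The plan is to prove both identities by direct substitution, using only the definitions of $p_t$ and $h$ together with the formula $-d/c = (k\beta - \alpha)/(k-1)$ established in Lemma \ref{lem-image of -d/c under h}.

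For the first identity, I would substitute $p_t = t\alpha + (1-t)\beta$ into $h(z) = \frac{z-\beta}{z-\alpha}$. The numerator simplifies as $p_t - \beta = t(\alpha - \beta)$ and the denominator as $p_t - \alpha = (t-1)(\alpha - \beta)$, so the common factor $\alpha - \beta$ cancels and $h(p_t) = t/(t-1)$ immediately.

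For the second identity, the key preparatory step is to rewrite the argument $-p_t - 2d/c$ as a single $\C$-linear combination of $\alpha$ and $\beta$. Using $-2d/c = 2(k\beta - \alpha)/(k-1)$ together with $-p_t = -t\alpha - (1-t)\beta$, and placing everything over the common denominator $k-1$, I would obtain
$$ -p_t - \frac{2d}{c} = \frac{-(tk - t + 2)\,\alpha + (tk - t + k + 1)\,\beta}{k-1}. $$
Next I would compute $h$ at this point by writing $\beta$ and $\alpha$ over $k-1$ before subtracting. The resulting numerator $\bigl[-(tk-t+2)\alpha + (tk-t+2)\beta\bigr]/(k-1) = (tk - t + 2)(\beta - \alpha)/(k-1)$ and the resulting denominator $\bigl[-(tk - t + k + 1)\alpha + (tk - t + k + 1)\beta\bigr]/(k-1) = (tk - t + k + 1)(\beta - \alpha)/(k-1)$ share the factor $(\beta - \alpha)/(k-1)$, whose cancellation yields $(tk - t + 2)/(tk - t + k + 1)$.

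There is no conceptual obstacle; both formulas are algebraic identities reducing to cancellation of $\alpha - \beta$ after appropriate regrouping. The only real pitfall is bookkeeping on the signs and the pairwise grouping of the $(tk - t)$ terms in the second computation, where it is essential to use the explicit form of $-d/c$ from Lemma \ref{lem-image of -d/c under h} rather than leaving $d$ and $c$ in the expression.
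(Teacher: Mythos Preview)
Your proposal is correct and follows essentially the same approach as the paper: both identities are verified by direct substitution, using the formula $-d/c = (k\beta-\alpha)/(k-1)$ from Lemma~\ref{lem-image of -d/c under h} and then cancelling the common factor $\alpha-\beta$ (or $\beta-\alpha$). The only cosmetic difference is that you first simplify the argument $-p_t - 2d/c$ before applying $h$, whereas the paper substitutes inside the fraction for $h$ and simplifies afterward; the algebra is otherwise identical.
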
 

\begin{proof}
The straightforward calculation shows that $ h(p_t) = \frac{t}{t-1} $. So we omit the detail of the calculation. 
%
%
%
By the equation \eqref{eq-inverse image of -d/c} in Lemma \ref{lem-image of -d/c under h}, we have $ -\frac{d}{c} = \frac{k \beta - \alpha}{k-1} $. Then
\begin{align*}
h \left(-p_t- \frac{2d}{c} \right) 
&= \frac{\left(-p_t- \frac{2d}{c} \right) - \beta}{\left(-p_t- \frac{2d}{c} \right) - \alpha} \\
&= \frac{-t\alpha - (1-t)\beta - \beta - \frac{2d}{c}}{-t\alpha - (1-t)\beta - \alpha - \frac{2d}{c}} \\[0.2em]
&= \frac{t\alpha + (2-t)\beta - \frac{2k\beta - 2 \alpha}{k-1}}{(1+t) \alpha + (1-t) \beta - \frac{2k\beta - 2 \alpha}{k-1}} \qquad \textrm{by \eqref{eq-inverse image of -d/c} in Lemma \ref{lem-image of -d/c under h}}\\[0.2em]
&= \frac{(tk - t + 2)(\alpha - \beta)}{(tk - t + k +1)(\alpha - \beta)} \\[0.2em]
&= \frac{tk - t + 2}{tk - t + k +1} .
\end{align*}
It completes the proof.
\end{proof}

\smallskip
\begin{cor} \label{cor-image under h at t0 and t half}
Let $ h $ be the M\"obius map $ h(z) = \frac{z- \beta}{z - \alpha} $. Then we obtain that 
%
\begin{align*}
h\left(-\beta - \frac{2d}{c} \right) = \frac{2}{k+1}, \quad 
&\text{and} \quad  h \left(-\frac{\alpha + \beta}{2} - \frac{2d}{c} \right) = \frac{k + 3}{3k + 1} .
\end{align*}
\end{cor}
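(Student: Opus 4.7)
The plan is to obtain both identities as immediate specializations of the formula
$$h\left(-p_t - \frac{2d}{c}\right) = \frac{tk - t + 2}{tk - t + k + 1}$$
from Lemma \ref{lem-image of the points in the line alpha and beta}, where $p_t = t\alpha + (1-t)\beta$. So the work reduces to identifying the correct values of the parameter $t$ for the two points $-\beta - \frac{2d}{c}$ and $-\frac{\alpha+\beta}{2} - \frac{2d}{c}$, and then performing a trivial substitution.

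For the first identity, I would observe that $p_t = \beta$ precisely when $t = 0$, so that $-\beta - \frac{2d}{c} = -p_0 - \frac{2d}{c}$. Substituting $t = 0$ into the formula above immediately yields $\frac{2}{k+1}$.

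For the second identity, I would note that $p_t = \frac{\alpha+\beta}{2}$ precisely when $t = \frac{1}{2}$, since $\frac{1}{2}\alpha + \frac{1}{2}\beta = \frac{\alpha+\beta}{2}$. Substituting $t = \frac{1}{2}$ into the formula gives
$$\frac{\tfrac{k}{2} - \tfrac{1}{2} + 2}{\tfrac{k}{2} - \tfrac{1}{2} + k + 1} = \frac{k+3}{3k+1}$$
after clearing the common factor of $\tfrac{1}{2}$ from numerator and denominator.

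There is no genuine obstacle here: the corollary is a bookkeeping statement that records two particular values of $t$ in the general formula already proved, and the only computation needed is the algebraic simplification in the $t = \tfrac{1}{2}$ case, which is routine.
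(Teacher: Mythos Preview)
Your proposal is correct and follows exactly the same approach as the paper: the paper's proof simply observes that $p_0 = \beta$ and $p_{1/2} = \frac{\alpha+\beta}{2}$, then substitutes $t=0$ and $t=\tfrac{1}{2}$ into the formula from Lemma~\ref{lem-image of the points in the line alpha and beta}. Your write-up is in fact slightly more detailed than the paper's, since you exhibit the algebraic simplification in the $t=\tfrac{1}{2}$ case explicitly.
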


\begin{proof}
Observe that $ p_0 = \beta $ and $ p_{\frac{1}{2}} = \frac{\alpha + \beta}{2} $ in Lemma \ref{lem-image of the points in the line alpha and beta}. Put $ t=0 $ for $ h\left(-\beta - \frac{2d}{c} \right) $ and put $ t = \frac{1}{2} $ for $ h \left(-\frac{\alpha + \beta}{2} - \frac{2d}{c} \right) $ in Lemma \ref{lem-image of the points in the line alpha and beta}. 
\end{proof}
\smallskip
\begin{lem} \label{lem-image of circle 1}
Let $ g $ be the hyperbolic M\"obius map $ g(z) = \frac{az + b}{cz + d} $ where $ ad - bc = 1 $ and $ c \neq 0 $. Let $ h $ be another M\"obius map as follows
$$  h(z) = \frac{z- \beta}{z - \alpha} . $$
where $ \alpha $ and $ \beta $ be the attracting and the repelling fixed point of $ g $ respectively. Then
$$
\partial S \left( \frac{1}{\sqrt{k}} \right) = \left\{ z \colon \, \left| z + \frac{d}{c} \right| = \left| \frac{d}{c} + \beta \right| \,\right\}
$$
and 
$$
h\left( \partial S \left( \frac{1}{\sqrt{k}} \right) \right)
 = \left\{ w \colon \, \left| w - \frac{1}{k + 1} \right| = \frac{1}{k+1} \,\right\}
$$
where $ k = \frac{1}{(c\beta + d)^2} >1 $. 
\end{lem}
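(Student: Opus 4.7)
The first equality is immediate from the definitions. Since we are in the hyperbolic case with $a+d>2$, the quantity $c\beta+d=(a+d-\sqrt{(a+d)^2-4})/2$ is a positive real number, so $|c\beta+d|=c\beta+d$ and $1/\sqrt k=|c\beta+d|$. Substituting the radius $r=1/\sqrt k$ into the definition $\partial S(r)=\{\,|z+d/c|=r/|c|\,\}$ yields $\{\,|z+d/c|=|c\beta+d|/|c|=|\beta+d/c|\,\}$, which is the set on the right-hand side.

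For the second equality, my strategy is to produce two antipodal points of $\partial S(1/\sqrt k)$ lying on the invariant line $L_\infty$ and then use conformality of $h$. The point $\beta$ itself is on the circle, trivially, because $|\beta+d/c|=|d/c+\beta|$; its reflection through the center $-d/c$, namely $-\beta-2d/c$, is then the antipodal point on the circle, and both lie on $L_\infty$. I already have $h(\beta)=0$, and the $t=0$ instance of Corollary \ref{cor-image under h at t0 and t half} gives $h(-\beta-2d/c)=2/(k+1)$.

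Next I would verify that $h(\partial S(1/\sqrt k))$ is a genuine bounded circle rather than a line: the pole $\alpha$ of $h$ satisfies $|\alpha+d/c|=|c\alpha+d|/|c|=\sqrt k/|c|$, using the identity $(c\alpha+d)(c\beta+d)=1$, which differs from the radius $1/(\sqrt k\,|c|)$ precisely because $k>1$; hence $\alpha\notin\partial S(1/\sqrt k)$ and the image is a circle. To pin down this circle, I would invoke conformality: $L_\infty$ passes through the center $-d/c$ and so meets $\partial S(1/\sqrt k)$ orthogonally at both intersection points, while by the preceding corollary $h(L_\infty)=\R\cup\{\infty\}$, so the image circle meets $\R$ orthogonally. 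This forces the center of the image circle to lie on $\R$. Two real points on a circle whose center lies on the real axis must be diameter endpoints, so the real images $0$ and $2/(k+1)$ are antipodal, giving center $1/(k+1)$ and radius $1/(k+1)$.

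The only step demanding any thought is the final orthogonality argument; everything else reduces to direct substitution into identities already derived in the preceding lemmas and corollary, so I expect no genuine obstacle.
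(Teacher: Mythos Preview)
Your proposal is correct and follows essentially the same route as the paper: identify the antipodal points $\beta$ and $-\beta-2d/c$ on $L_\infty\cap\partial S(1/\sqrt{k})$, compute their images $0$ and $2/(k+1)$ via Corollary~\ref{cor-image under h at t0 and t half}, and invoke conformality of $h$ together with the fact that $L_\infty$ passes through the center of the circle to conclude that these images are diameter endpoints. Your argument is in fact slightly more careful than the paper's in one respect: you explicitly verify that $\alpha\notin\partial S(1/\sqrt{k})$ so that the image is a genuine circle rather than a line, a point the paper leaves implicit.
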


\begin{proof}
The fact that $ k = \frac{1}{(c\beta+ d)^2} $ implies \ 
$  \frac{| c\beta + d|}{|c|} = \frac{1}{\sqrt{k}|c|} $. Thus by Lemma \ref{lem-invariant region of Mobius}, we have 
$$
\partial S \left( \frac{1}{\sqrt{k}} \right) = \left\{ z \colon \, \left| z + \frac{d}{c} \right| = \left| \frac{d}{c} + \beta \right| \,\right\} .
$$
The extended straight line $ L_{\infty} $ contains $ \beta $ and $ -\beta - \frac{2d}{c} $ by Lemma \ref{lem-image of -d/c under h}. The midpoint of the line segment between these two points is $ -\frac{d}{c} $, which is the center of $ \partial S \left( \frac{1}{\sqrt{k}} \right) $. The half of the distance between $ \beta $ and $ -\beta - \frac{2d}{c} $ is the radius. Moreover, $ \partial S \left( \frac{1}{\sqrt{k}} \right) $ meets $ L_{\infty} $ at two points $ \beta $ and $ -\beta - \frac{2d}{c} $ at right angle because $ L_{\infty} $ goes through the center of $ \partial S \left( \frac{1}{\sqrt{k}} \right) $. Since $ h $ is conformal, $ F(L_{\infty}) $ also meets $ F \left(\partial S \left( \frac{1}{\sqrt{k}} \right) \right) $ at $ F(\beta) $ and $ F \left(-\beta - \frac{2d}{c} \right) $ at right angle. 
%
%
%
%
$ h(\beta) =0 $ by the definition of $ h $. Corollary \ref{cor-image under h at t0 and t half} implies that 
\begin{align} \label{eq-radius of circle 1}
h \left(-\beta - \dfrac{2d}{c} \right) 
= \frac{2}{k+1} .
\end{align}
Then center of the circle $ h \left( \partial S \left( \frac{1}{\sqrt{k}} \right) \right) $ is the midpoint of the line segment between $ 0 $ and $ \frac{2}{k+1} $. Hence, $ h \left(\partial S \left( \frac{1}{\sqrt{k}} \right) \right) $ is the following circle 
\begin{align*}
\left\{ w \,\colon \left| w- \frac{1}{k+1} \right| = \frac{1}{k+1} \, \right\} .
\end{align*}
\end{proof}

\begin{cor} \label{cor-equation for distance and k}
The following equations hold 
\begin{align*}
\left| \frac{c\beta + d}{c} \right| = \frac{1}{k-1}\,|\alpha - \beta | \quad \ \text{and} \quad \ \frac{1}{|c|} = \frac{\sqrt{k}}{k-1}\,|\alpha - \beta | 
\end{align*}
where $ k = \frac{1}{(c\beta+ d)^2} $. 
\end{cor}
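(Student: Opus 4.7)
The plan is to extract both identities as direct algebraic consequences of the formula for $-d/c$ established in Lemma \ref{lem-image of -d/c under h}, together with the definition $k=1/(c\beta+d)^2$ and the positivity facts built up in the proof of Lemma \ref{lem-fixed points of hyp Mobius map}.

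First I would start from the identity $-\dfrac{d}{c}=\dfrac{k\beta-\alpha}{k-1}$ of Lemma \ref{lem-image of -d/c under h} and add $\beta$ to both sides, which gives
\[
\beta+\frac{d}{c} \;=\; -\frac{k\beta-\alpha}{k-1}+\beta \;=\; \frac{\alpha-\beta}{k-1}.
\]
Since $k>1$ by Lemma \ref{lem-congugation of hyp Mobius map}, taking absolute values yields the first identity $\left|\dfrac{c\beta+d}{c}\right|=\dfrac{|\alpha-\beta|}{k-1}$.

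Next I would handle the second identity by computing $|c\beta+d|$ directly from $k=1/(c\beta+d)^2$. The subtle point — and the only place one has to be careful — is that $c\beta+d$ is a priori complex, so $|c\beta+d|$ need not equal $1/\sqrt{k}$ for an arbitrary complex square root. However, under the standing assumption $\mathrm{tr}(g)>2$, the computation in the proof of Lemma \ref{lem-fixed points of hyp Mobius map} gives $c\alpha+d=\tfrac{1}{2}\bigl(a+d+\sqrt{(a+d)^2-4}\bigr)>1$, a positive real. Combined with $(c\alpha+d)(c\beta+d)=1$ from \eqref{eq-ad-bc is 1}, this forces $c\beta+d$ to be a positive real number as well, so $|c\beta+d|=c\beta+d=1/\sqrt{k}$ with $\sqrt{k}>0$. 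Dividing by $|c|$ gives $\left|\dfrac{c\beta+d}{c}\right|=\dfrac{1}{\sqrt{k}\,|c|}$.

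Finally, equating this expression with the first identity just established produces
\[
\frac{1}{\sqrt{k}\,|c|} \;=\; \frac{|\alpha-\beta|}{k-1},
\]
and rearranging delivers $\dfrac{1}{|c|}=\dfrac{\sqrt{k}}{k-1}\,|\alpha-\beta|$. There is no real obstacle here; the only point worth flagging is the positivity argument for $c\beta+d$, which is what licenses passing from $(c\beta+d)^2=1/k$ to $|c\beta+d|=1/\sqrt{k}$ without ambiguity of sign.
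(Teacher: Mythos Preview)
Your proof is correct but takes a genuinely different route from the paper. The paper argues geometrically via Lemma~\ref{lem-image of circle 1}: it identifies $\left|\frac{c\beta+d}{c}\right|$ as the radius of the circle $\partial S\!\left(\tfrac{1}{\sqrt{k}}\right)$, then computes that same radius by pulling back the endpoints $0$ and $\tfrac{2}{k+1}$ of a diameter of $h\bigl(\partial S(1/\sqrt{k})\bigr)$ under $h^{-1}$ and halving their distance. You instead work purely algebraically: add $\beta$ to both sides of $-\tfrac{d}{c}=\tfrac{k\beta-\alpha}{k-1}$ from Lemma~\ref{lem-image of -d/c under h} and take absolute values. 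Your route is shorter and bypasses Lemma~\ref{lem-image of circle 1} entirely; the paper's detour through the circle is natural only because the corollary is positioned right after that lemma and reuses its picture.

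One small remark: your positivity argument for $c\beta+d$ is correct but slightly more than you need. Since $k=1/(c\beta+d)^2$ is already known to be real and greater than $1$, taking absolute values on both sides gives $|c\beta+d|^2=1/k$ immediately, so $|c\beta+d|=1/\sqrt{k}$ follows without having to determine the sign of $c\beta+d$ itself. The paper in fact invokes this step in exactly that terse form.
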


\begin{proof}
Observe that $ h^{-1}(w) = \frac{\alpha w - \beta}{w -1} $. Thus we have that 
$$
h^{-1} \left(\dfrac{2}{k+1} \right) = - \frac{k+1}{k-1}\,(\alpha - \beta) + \alpha 
$$
and $ h^{-1}(0) = \beta $. The distance between the above two points is the diameter of $ \partial S \left( \frac{1}{\sqrt{k}} \right) $. Then the half of this distance is the radius of $ \partial S \left( \frac{1}{\sqrt{k}} \right) $ as follows  
\begin{align*}
\frac{1}{2} \left| \,- \frac{k+1}{k-1}\,(\alpha - \beta) + \alpha - \beta \, \right| = \frac{1}{k-1}\,| \alpha - \beta | .
\end{align*}
However, $ \left| \frac{c\beta + d}{c} \right| $ is also the radius of $ \partial S \left( \frac{1}{\sqrt{k}} \right) $  by Lemma \ref{lem-image of circle 1}. Hence, we have $ \left| \frac{c\beta + d}{c} \right| = \frac{1}{k-1}\,|\alpha - \beta | $ and moreover, since $ \frac{1}{\sqrt{k}} = | c\beta + d| $, the equation $ \frac{1}{|c|} = \frac{\sqrt{k}}{k-1}\,|\alpha - \beta | $ holds. 
\end{proof}
\smallskip
\begin{rk}
\noindent 
The upper bound of $ \frac{1}{|c|} $ for every $ k>1 $ is the distance between $ -\frac{d}{c} $ and $ \frac{\alpha + \beta}{2} $ because
\begin{align*}
\left| \frac{\alpha + \beta}{2} - \left( -\frac{d}{c} \right) \right| = \left| \frac{a-d}{2c} + \frac{d}{c} \right| = \frac{a+d}{2|c|} > \frac{2}{2|c|} = \frac{1}{|c|} .
\end{align*}
Recall that $ \mathrm{tr}(g) = a+d = 2 + \tau $. Thus the equation, $ \frac{a+d}{2|c|} = \frac{1 + \frac{\tau}{2}}{|c|} $ holds. Then the disk $ \hat{\C} \setminus S \left(1 + \frac{\tau}{2} \right) $ as follows 
\begin{align} \label{eq-disk containing s1}
\hat{\C} \setminus S \left(1 + \frac{\tau}{2} \right) = \left\{ z \colon  \left| z + \frac{d}{c} \right| \leq \left| \frac{\alpha + \beta}{2} + \frac{d}{c} \right| \, \right\} .
\end{align}
Moreover, this disk contains compactly the disk $ \hat{\C} \setminus S(1) $. 
\end{rk}
\smallskip

\begin{prop} \label{cor-circle for escaping boundary}
The circle $ h \left( \partial S \left(1 + \frac{\tau}{2} \right) \right) $ is as follows 
\begin{align} \label{eq-boundary of S 5/4}
\left\{ w \colon \; \left| w - \frac{1}{2} \left(\frac{k+3}{3k+1} - 1 \right) \right| = \frac{1}{2} \left( \frac{k+3}{3k+1} + 1 \right) \right\} .
\end{align}
\end{prop}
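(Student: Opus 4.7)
The plan is to follow exactly the template used in the proof of Lemma \ref{lem-image of circle 1}: locate the two points where the circle $\partial S(1+\tau/2)$ meets the extended line $L_\infty$, observe that these must be endpoints of a diameter of the image circle under $h$, and then compute them with Corollary \ref{cor-image under h at t0 and t half}.

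First I would identify the two intersection points of $\partial S(1+\tau/2)$ with $L_\infty$. By the remark preceding the proposition, the distance from $-\tfrac{d}{c}$ to $\tfrac{\alpha+\beta}{2}$ equals $\tfrac{1+\tau/2}{|c|}$, so $\tfrac{\alpha+\beta}{2} = p_{1/2}$ lies on $\partial S(1+\tau/2)$; the point diametrically opposite to it through the center $-\tfrac{d}{c}$ is $-p_{1/2} - \tfrac{2d}{c} = -\tfrac{\alpha+\beta}{2} - \tfrac{2d}{c}$, which also lies on $L_\infty$ by Lemma \ref{lem-invariance of extended line of g}.

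Next I would invoke conformality exactly as in the proof of Lemma \ref{lem-image of circle 1}. Since $L_\infty$ passes through the center of $\partial S(1+\tau/2)$, it is perpendicular to this circle at both intersection points. The image $h(L_\infty)$ is the extended real line, and $h$ being conformal preserves the right angle; therefore $\R \cup \{\infty\}$ crosses $h(\partial S(1+\tau/2))$ perpendicularly at $h(p_{1/2})$ and $h(-p_{1/2} - \tfrac{2d}{c})$. A line cutting a circle perpendicularly at two points must pass through its center, so these two images are the endpoints of a diameter of $h(\partial S(1+\tau/2))$.

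Finally, I would compute these endpoints explicitly. Setting $t = \tfrac{1}{2}$ in Lemma \ref{lem-image of the points in the line alpha and beta} gives $h(p_{1/2}) = \tfrac{1/2}{1/2 - 1} = -1$, and Corollary \ref{cor-image under h at t0 and t half} gives $h(-\tfrac{\alpha+\beta}{2} - \tfrac{2d}{c}) = \tfrac{k+3}{3k+1}$. The center of the image circle is then the midpoint $\tfrac{1}{2}\bigl(\tfrac{k+3}{3k+1} - 1\bigr)$ and its radius is $\tfrac{1}{2}\bigl(\tfrac{k+3}{3k+1} + 1\bigr)$, which yields precisely the set \eqref{eq-boundary of S 5/4}. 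I do not anticipate any real obstacle here; the only point that deserves a brief remark is the verification that the two chosen points actually sit on $\partial S(1+\tau/2)$, for which the computation in the preceding remark suffices.
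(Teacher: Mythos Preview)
Your proposal is correct and follows essentially the same approach as the paper: identify the two diametrically opposite points $\tfrac{\alpha+\beta}{2}$ and $-\tfrac{\alpha+\beta}{2}-\tfrac{2d}{c}$ on $\partial S(1+\tau/2)\cap L_\infty$, use conformality of $h$ to conclude their images are the endpoints of a diameter of the image circle, and read off those images from Lemma~\ref{lem-image of the points in the line alpha and beta} and Corollary~\ref{cor-image under h at t0 and t half}. If anything, your write-up is slightly more careful than the paper's in explicitly invoking the preceding remark to confirm that $\tfrac{\alpha+\beta}{2}$ lies on $\partial S(1+\tau/2)$.
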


\begin{proof}
By the definition of $ \partial S(r) $ for $ r>0 $, the line connecting the fixed points $ \alpha $ and $ \beta $, say $ L $, goes through the center of the circle $ \partial S(r) $. In other words, the circle and line meet at two points at right angle. Since $ h $ is conformal, $ h(\partial S(r)) $ meets also the real line at two points at right angle.
\smallskip \\
Observe that two points in the set $ \partial S \left(1 + \frac{\tau}{2} \right) \cap L $ are $ - \frac{\alpha + \beta}{2} - \frac{2d}{c} $ and $ \frac{\alpha + \beta}{2} $. Thus it suffice to show that the image of two points in $ \partial S \left(1 + \frac{\tau}{2} \right) \cap L $ under $ h $ is the points $ \frac{k+3}{3k+1} $ and $ -1 $. 
%
%
In Lemma \ref{lem-image of the points in the line alpha and beta}, put $ t = \frac{1}{2} $. Hence, we have the following equations 
$$
h \left( \frac{\alpha + \beta}{2} - \dfrac{2d}{c} \right) = \frac{k+3}{3k+1}  \quad \text{and} \quad h \left( \frac{\alpha + \beta}{2} \right) = -1 
$$
for each $ k > 1 $. Then the midpoint of the two points $ \frac{k+3}{3k+1} $ and $ -1 $ is the center of the circle $ h \left( \partial S \left(1 + \frac{\tau}{2} \right) \right) $. Moreover, the half of the distance between these two points is the radius of $ h \left( \partial S \left(1 + \frac{\tau}{2} \right) \right) $. 
\end{proof}
\medskip
%
%
\begin{figure}
    \centering
    \includegraphics[scale=0.8]{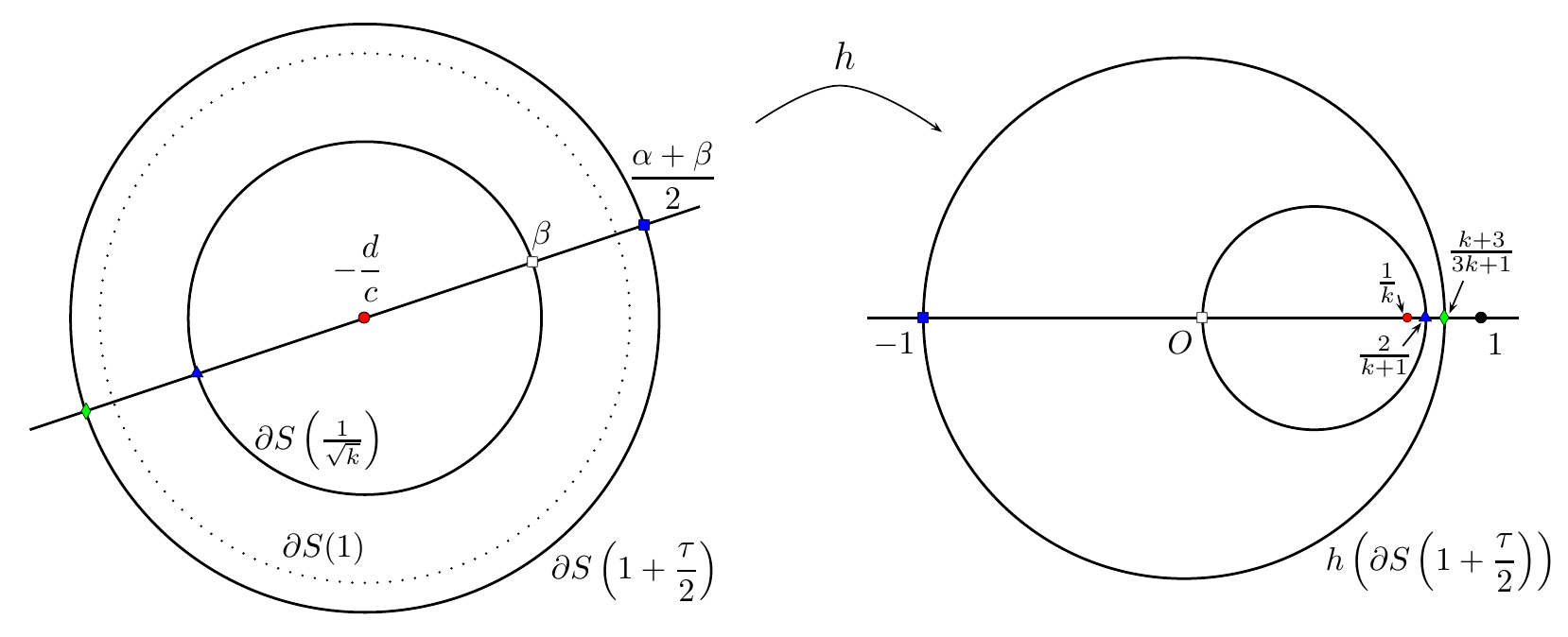}   
    \caption{Concentric circles and its images under $h$}
\end{figure}
%
\begin{rk}
Observe that the map $ y(k) = \frac{k+3}{3k+1} $ is a decreasing function for $ k > 0 $ and $ \frac{1}{3} < \frac{k+3}{3k+1} <1 $ for $ k > 1 $. $ y(1) = 1 $ and $ \lim_{k \rightarrow \infty} \frac{k+3}{3k+1} = \frac{1}{3} $. Then the circle in Corollary \ref{cor-circle for escaping boundary}, $ h \left( \partial S \left(1 + \frac{\tau}{2} \right) \right) $ is contained in the unit disk $ \left\{ w \colon |w| \leq 1 \right\} $ for all $ 1 < k < \infty $. 
\end{rk}

\medskip

\section{Avoided region} \label{sec-Avoided region}
The map $ g $ is the hyperbolic M\"obius map as follows 
$$ g(z) = \frac{az + b}{cz + d} $$
for $ ad - bc =1 $ and $ c \neq 0 $. Since the point $ \infty $ is not a fixed point of $ g $, the preimage of $ \infty $ under $ g $, namely, $ g^{-1}(\infty) $ is in the complex plane. For a given $ \e > 0 $, the sequence $ \{ a_i \}_{i \in \N_0 } $ satisfying that  
$$ | a_{i+1} - g(a_i)| \leq \e $$ 
contains $ g^{-1}(\infty) $,  say $ a_k $, then $ | a_{k+1} - \infty | $ is not bounded where $ | \cdot | $ is the absolute value of the complex number. 
In order to exclude $ g^{-1}(\infty) $ in the whole sequence $ \{ a_i \}_{i \in \N_0 } $, the region $ \RR_g $ is defined such that if the initial point of the sequence, $ a_0 $ is not in $ \RR_g $, then the whole sequence $ \{ a_i \}_{i \in \N_0 } $ cannot be in the same region $ \RR_g $. Let the forward orbit of $ p $ under $ F $ be the set $ \{ F(p),F^2(p), \ldots ,F^n(p) , \ldots \} $ and denote it by $ \Orb_{\N}(p, F) $. 
\bigskip 
\begin{definition} \label{def-avoided region}
Let $ F $ be the map on $ \hat{\C} $ which does not fix $ \infty $. Avoided region for the sequence $ \{ a_i \}_{i \in \N_0 } $ satisfying $ | a_{i+1} - F(a_i)| \leq \e $ for a given $ \e >0 $ which is denoted by $ \RR_F \subset \C $ is defined as follows
\begin{enumerate}
\item $ \hat{\C} \setminus \RR_F $ is (forward) invariant under $ F $, that is, $ F(\hat{\C} \setminus \RR_F) \subset \hat{\C} \setminus \RR_F $.
\item For any given initial point $ a_0 $ in $ \C \setminus \RR_F $, all points in the sequence $ \{ a_i \}_{i \in \N_0 } $ satisfying $ | a_{i+1} - F(a_i)| \leq \e $ are in $ \C \setminus \RR_F $. 
\end{enumerate}
If $ \RR_F $ contains $ \Orb_{\N}(p, F^{-1}) $ where $ p \in \hat{\C} $, then it is called the avoided region at $ p $ and is denoted by $ \RR_F(p) $. 
\end{definition}
\smallskip 
\noindent In the above definition, the avoided region does not have to be connected.  
\begin{rk}
The set $ \hat{\C} \setminus S(1+t) $ in Proposition \ref{prop-stability on S1+t} is an avoided region at $ \infty $. However, avoided region $ \hat{\C} \setminus S \left(1+\frac{\tau}{2} \right) $ can be extended to some neighborhood of $ \Orb_{\N}(\infty, g^{-1}) $, which is denoted to be $ \RR_{g}(\infty) $ in the following proposition. 
\end{rk}
%
%
%
%

%
%
\begin{figure}
    \centering
    \begin{subfigure}[b]{0.48\textwidth}
        \includegraphics[width=\textwidth]{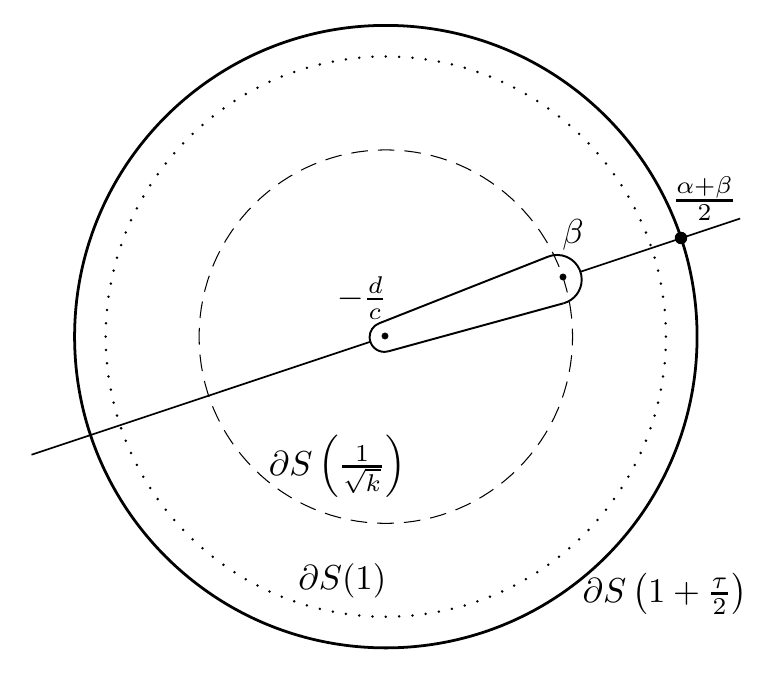}
        \caption{Avoided region $\RR_{g}(\infty) $}
        \label{fig:avoided region for g}
    \end{subfigure}
    ~ 
    \begin{subfigure}[b]{0.48\textwidth}
        \includegraphics[width=\textwidth]{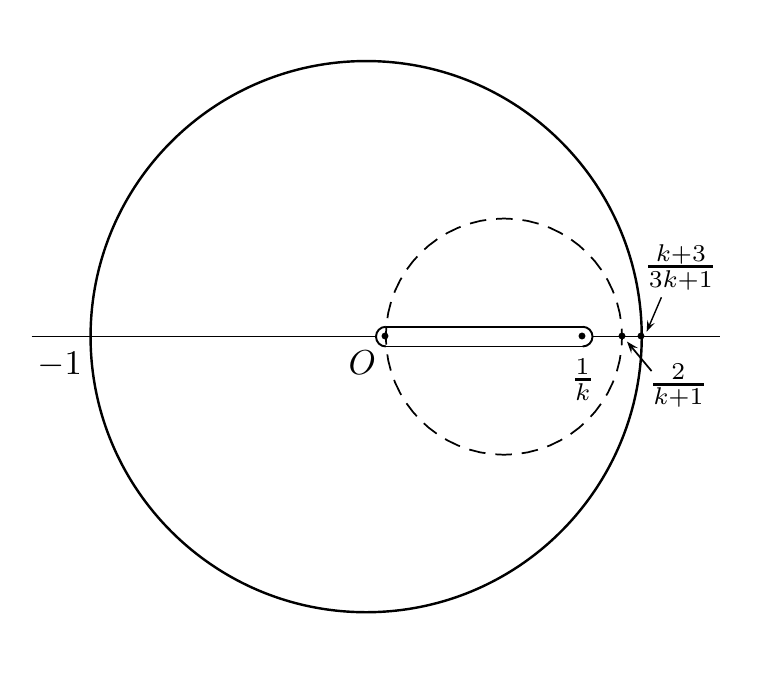}
        \caption{Avoided region $\RR_{f}(1)$}
        \label{fig:avoided region for f}
    \end{subfigure}
    \caption{Avoided regions for hyerbolic M\"obius maps}\label{fig:Avoided regions}
\end{figure}
Recall that for a complex number $ z $, $ \re\,z $ is the real part of $ z $ and $ \im\,z $ is the imaginary part of $ z $. The complex conjugate of $ z $ is denoted by $ \bar z $. 
%
\begin{prop} \label{prop-avoided region for f}
Define the following regions  
\begin{align*}
\RR_{1}^t &= \left\{ w \colon |w| \leq \frac{t\delta}{k-1} \right\} \cap \left\{ w \colon \re\,w \leq 0 \right\} \\[0.3em]
\RR_2^t &=  \left\{ w \colon -\frac{t\delta}{k-1} \leq \im \,w \leq \frac{t\delta}{k-1} \right\} \cap \left\{ w \colon 0 \leq \re\,w \leq \frac{1}{k} \right\}  \\[0.3em]
\RR_3^t &= \left\{ w \colon \left| w - \frac{1}{k} \right| \leq \frac{t\delta}{k-1} \right\} \cap \left\{ w \colon \re\,w \geq \frac{1}{k} \right\} 
\end{align*}
for $ t \geq 1 $ and denote the union $ \RR^t_1 \cup \RR^t_2 \cup \RR^t_3 $ by $ \RR^t $. 
Let the map $ f $ be the dilation, $ f(w) = kw $ for the given number $ k>1 $. Let $ \{ c_i \}_{i \in \N_0} $ be the sequence for a given $ \delta > 0 $ satisfying that
\begin{align} \label{eq-sequence c-n for f}
| c_{i+1} - f(c_i) | < \delta 
\end{align}
for all $ i \in \N_0 $. If $ \delta < \left(1- \frac{1}{k} \right)^2 $, then the set $ \RR^t $ is an avoided region for $ \{ c_i \}_{i \in \N_0} $, namely, $ \RR_f $. Furthermore, $ \RR_f $ contains the the orbit, $ \Orb_{\N}(1, f^{-1}) $. Hence, the avoided region can be chosen as $ \RR_f(1) $. 
\end{prop}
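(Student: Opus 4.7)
The plan is to verify the three defining requirements for $\RR^t$ to be an avoided region of $f$ at $1$: that $\Orb_{\N}(1,f^{-1})\subset\RR^t$, that $\hat{\C}\setminus\RR^t$ is forward invariant under $f$, and that $\hat{\C}\setminus\RR^t$ is forward invariant under every iteration with additive error bounded by $\delta$. The orbit containment is immediate because $f^{-n}(1)=k^{-n}\in(0,1/k]$ is real, so each iterate lies on the real axis with real part in $[0,1/k]$, placing it in $\RR_2^t$.

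For forward invariance under $f$, since $f^{-1}(w)=w/k$ the task reduces to $(1/k)\RR^t\subseteq\RR^t$. The inclusions $(1/k)\RR_1^t\subseteq\RR_1^t$ and $(1/k)\RR_2^t\subseteq\RR_2^t$ are direct from the definitions. For $w\in\RR_3^t$, I write $w=1/k+u$ with $|u|\le r:=t\delta/(k-1)$ and $\re u\ge 0$, so $w/k=1/k^{2}+u/k$. If $\re(w/k)\le 1/k$, then $w/k\in\RR_2^t$ since $|\im(w/k)|\le r/k\le r$ and $\re(w/k)\ge 1/k^{2}\ge 0$. Otherwise $\re u>(k-1)/k$, and a constrained optimization using $(\im u)^{2}\le r^{2}-(\re u)^{2}$ gives
\[
k^{4}\bigl|w/k-1/k\bigr|^{2}\le\bigl(k\,\re u-(k-1)\bigr)^{2}+k^{2}\bigl(r^{2}-(\re u)^{2}\bigr)\le k^{2}r^{2}-(k-1)^{2}\le k^{4}r^{2},
\]
hence $w/k\in\RR_3^t$.

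For invariance under perturbation one needs $(w-\eta)/k\in\RR^t$ for every $w\in\RR^t$ and every $|\eta|<\delta$. The arithmetic identity $r(k-1)=t\delta$ together with $t\ge 1$ gives $r+\delta\le rk$, so every potential image is within distance $(r+\delta)/k\le r$ of $w/k$. This alone handles $w\in\RR_1^t\cup\RR_2^t$: the hypothesis $\delta<(1-1/k)^{2}$ implies $\delta<(k-1)/k$, preventing a perturbed image from crossing $\re=1/k$ into $\RR_3^t$. The main obstacle is $w\in\RR_3^t$. Setting $v=u-\eta$ gives $|v|\le r+\delta$ and $\re v\ge -\delta$; in the subcase $\re((w-\eta)/k)>1/k$ one additionally has $\re v>(k-1)/k$, and the same constrained optimization yields
\[
k^{4}\bigl|(w-\eta)/k-1/k\bigr|^{2}\le k^{2}(r+\delta)^{2}-(k-1)^{2},
\]
whose desired bound $\le k^{4}r^{2}$ reduces through the factorization $(r+\delta-rk)(r+\delta+rk)=\delta(1-t)\bigl[(k+1)r+\delta\bigr]$ to an inequality with non-positive left-hand side whenever $t\ge 1$. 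The remaining subcase $\re((w-\eta)/k)\le 0$, which can occur only if $\delta\ge 1/k$, is handled by the analogous modulus estimate $(r+\delta)^{2}-r^{2}k^{2}\le 1/k^{2}$, concluded via the same sign argument.
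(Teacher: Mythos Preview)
Your contrapositive strategy---reducing everything to the inclusion $(w-\eta)/k\in\RR^t$ for every $w\in\RR^t$ and $|\eta|<\delta$---is a legitimate alternative to the paper's forward argument. The paper works directly with $c_j\notin\RR^t\Rightarrow c_{j+1}\notin\RR^t$ by introducing two auxiliary regions, the disk $\DD_1^t=\{|w|\le r\}$ and the strip $\DD_2^t=\{|\im w|\le r\}$, proving that $\hat\C\setminus\DD_j^t$ is already invariant under the perturbed dynamics (Lemma~\ref{lem-1st lemma for avoided region}), and then handling the transition from $\re\le 0$ to $\re\ge 0$ with a separate geometric lemma (Lemma~\ref{lem-2nd lemma for avoided region}). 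Your route trades those auxiliary sets for explicit casework on $\re((w-\eta)/k)$; the factorization $(r+\delta)^2-k^2r^2=\delta(1-t)[(k+1)r+\delta]$ that drives your $\RR_3^t$ argument is clean and makes the role of $t\ge 1$ transparent.

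However, your treatment of $\RR_1^t\cup\RR_2^t$ has a genuine gap. First, the sentence ``every potential image is within distance $(r+\delta)/k\le r$ of $w/k$'' is incorrect as written: the distance from $(w-\eta)/k$ to $w/k$ is $|\eta|/k<\delta/k$, not $(r+\delta)/k$. More importantly, for $w\in\RR_2^t$ you only rule out the crossing $\re((w-\eta)/k)>1/k$ and never address the possibility $\re((w-\eta)/k)<0$. That case does occur (take $\re w$ small and $\re\eta>\re w$), and to land in $\RR_1^t$ you must check $|(w-\eta)/k|\le r$. This follows, writing $w=x+iy$ and $\eta=a+ib$, from $0\le x<a$, which gives $(x-a)^2\le a^2$ and hence
\[
|w-\eta|^2\le a^2+(r+|b|)^2=a^2+b^2+r^2+2r|b|<\delta^2+r^2+2r\delta=(r+\delta)^2\le k^2r^2,
\]
but you need to supply this step. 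Similarly, for $w\in\RR_3^t$ the middle subcase $0\le\re((w-\eta)/k)\le 1/k$ (landing in $\RR_2^t$) is skipped; it is immediate from $|\im v|/k\le(r+\delta)/k\le r$, but should be stated.
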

The proof of Proposition \ref{prop-avoided region for f} requires the combined result of lemmas as follows. 
\smallskip
\begin{lem} \label{lem-1st lemma for avoided region}
Let $ f $ be the map $ f(w) = kw $ for $ k > 1 $. The sequence $ \{ c_i \}_{i \in \N_0} $ is defined in \eqref{eq-sequence c-n for f} for $ \delta > 0 $. Consider the following regions
\begin{align*}
\DD_1^t = \left\{ w \colon |w| \leq \frac{t\delta}{k-1} \right\} , \quad 
\DD_2^t = \left\{ w \colon -\frac{t\delta}{k-1} \leq \im \,w \leq \frac{t\delta}{k-1} \right\} 
\end{align*}
for $ \delta > 0 $ and $ t \geq 1 $. If $ c_0 \in \hat{\C} \setminus \DD_j^t $ for $ j =1,2 $, then the whole sequence $ \{ c_i \}_{i \in \N_0} $ is contained in the same set $ \hat{\C} \setminus \DD_j^t $ respectively. 
\end{lem}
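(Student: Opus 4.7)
The plan is to prove each of the two cases separately by induction on $i$, exploiting the fact that $f(w)=kw$ with $k>1$ expands both the modulus $|w|$ and the imaginary part $\im\,w$ by a factor of $k$, while the perturbation $|c_{i+1}-f(c_i)|$ is only $\delta$. The threshold radius $\frac{t\delta}{k-1}$ is chosen precisely so that this expansion dominates the perturbation.

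First I would handle $\DD_1^t$. Assume inductively $|c_i|>\frac{t\delta}{k-1}$. By the reverse triangle inequality applied to \eqref{eq-sequence c-n for f},
\[
|c_{i+1}| \;\geq\; |f(c_i)| - |c_{i+1}-f(c_i)| \;>\; k|c_i| - \delta \;>\; \frac{kt\delta}{k-1} - \delta \;=\; \frac{(kt-k+1)\delta}{k-1}.
\]
It then remains to verify the elementary inequality $kt-k+1\geq t$, which is equivalent to $(t-1)(k-1)\geq 0$ and so holds for every $t\geq 1$ since $k>1$. Hence $|c_{i+1}|>\frac{t\delta}{k-1}$, completing the induction step for $\DD_1^t$.

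Next I would handle $\DD_2^t$ in the same spirit, but using the imaginary part rather than the modulus. Writing $c_{i+1} = kc_i + (c_{i+1}-f(c_i))$ and taking imaginary parts, we have
\[
\im\,c_{i+1} \;=\; k\,\im\,c_i \;+\; \im(c_{i+1}-f(c_i)),
\]
where $|\im(c_{i+1}-f(c_i))|\leq|c_{i+1}-f(c_i)|<\delta$. If $\im\,c_i > \frac{t\delta}{k-1}$ then $\im\,c_{i+1} > k\cdot\frac{t\delta}{k-1}-\delta = \frac{(kt-k+1)\delta}{k-1} \geq \frac{t\delta}{k-1}$ by the same elementary inequality as above. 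Symmetrically, if $\im\,c_i < -\frac{t\delta}{k-1}$ then $\im\,c_{i+1} < -\frac{t\delta}{k-1}$. In either case the sign of $\im\,c_i$ is preserved and $|\im\,c_{i+1}|>\frac{t\delta}{k-1}$, so $c_{i+1}\notin\DD_2^t$.

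There is no real obstacle here; the argument is a one-line induction in each case, and the only content is the choice of radius $\frac{t\delta}{k-1}$ which makes the expansion-versus-perturbation bookkeeping work out. The main subtlety worth flagging is that $\DD_1^t$ and $\DD_2^t$ must be handled by different geometric quantities (modulus versus imaginary part), because $f$ is a real dilation that multiplies $\im\,w$ coordinatewise by $k$, and this is exactly what makes the strip $\DD_2^t$ invariant under $f^{-1}$.
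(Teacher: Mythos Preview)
Your proposal is correct and follows essentially the same approach as the paper: induction using the reverse triangle inequality applied to the modulus for $\DD_1^t$ and to the imaginary part for $\DD_2^t$. The only cosmetic difference is that the paper weakens $-\delta$ to $-t\delta$ (using $t\geq 1$) to get $\frac{kt\delta}{k-1}-t\delta=\frac{t\delta}{k-1}$ directly, whereas you keep $-\delta$ and verify the equivalent inequality $(t-1)(k-1)\geq 0$; your explicit sign-preservation argument for $\DD_2^t$ is also slightly more careful than the paper's absolute-value version.
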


\begin{proof}
Suppose firstly that $ |c_0| > \frac{t\delta}{k-1} $. Thus $ |f(c_0)| > \frac{kt\delta}{k-1} $. The inequality $ | c_{i+1} - f(c_i) | < \delta $ implies that 
\begin{equation} 
\delta > | c_1 - f(c_0) | \geq |\, |c_1| - |f(c_0)| \,| 
\end{equation}
\begin{equation} \label{eq-triangular ineq 2}
|c_1| > |f(c_0)| - \delta > \dfrac{kt\delta}{k-1} - t\delta = \dfrac{t\delta}{k-1} 
\end{equation}
for $ t \geq 1 $. Then $ c_1 \in \hat{\C} \setminus \DD_1^t $. By induction the whole sequence is also contained in $ \hat{\C} \setminus \DD_1^t $. 
\smallskip \\
Similarly, suppose that $ c_0 \in \DD_2^t $. Since $ | \im (c_1 - f(c_0)) | \leq | c_1 - f(c_0) | < \delta $, the assumption $ c_0 \in \DD_2^t $ implies the following inequality.
\begin{align*}
| \im\,c_1 | &> | \im\,f(c_0) | - \delta = k \,| \im\,c_0 | - \delta > \dfrac{kt\delta}{k-1} - t\delta = \dfrac{t\delta}{k-1} 
\end{align*}
for $ t \geq 1 $. Then $ c_1 \in \DD_2^t $. Hence, by induction the whole sequence is also contained in $ \hat{\C} \setminus \DD_2^t $. 
\end{proof}
Observe that $ \DD_1^t \subset \DD_2^t $ for every $ t \geq 1 $ in Lemma \ref{lem-1st lemma for avoided region}. The region $ \RR_1^t $ is the half disk of $ \DD_1^t $ in the left half complex plane. Thus if a point $ c_j $ in $ \hat{\C} \setminus \RR_1^t $ and $ c_{j+1} $ is also contained in the set $ \{ w \colon \re\,w \leq 0 \} $ for some $ j \in \N_0 $, then $ c_{j+1} $ is contained in $ \big( \hat{\C} \setminus \RR_1^t \big) \cap \{ w \colon \re\,w \leq 0 \} $. However, $ c_{j+1} $ may be in the right half complex plane. Thus in order to construct the avoided region in Proposition \ref{prop-avoided region for f}, another lemma is required as follows.
\begin{figure}
    \centering
    \includegraphics[scale=0.85]{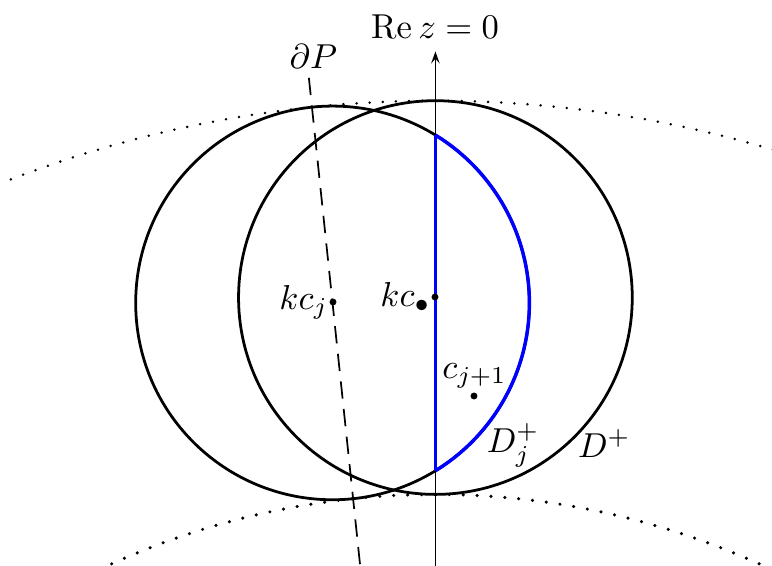}   
    \caption{}
\end{figure}
\begin{lem} \label{lem-2nd lemma for avoided region}
Let $ f $ be the map $ f(w) = kw $ for $ k > 1 $. The sequence $ \{ c_i \}_{i \in \N_0} $ is defined in \eqref{eq-sequence c-n for f} for $ \delta > 0 $. Suppose that $ c_j $ for some $ j \in \N_0 $ is contained in the set $ \big( \hat{\C} \setminus \DD_1^t \big) \cap \{ w \colon \re\,w \leq 0 \} $ but $ \re\,c_{j+1} \geq 0 $. 
Then $ c_{j+1} $ satisfies that $ | \im\,c_{j+1} | > \frac{t\delta}{k-1} $, that is, $ c_{j+1} \in \hat{\C} \setminus \DD_2^t $ for $ t \geq 1 $. 
\end{lem}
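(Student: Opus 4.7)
My plan is to work in Cartesian coordinates $c_j = x + iy$ and $c_{j+1} = u + iv$ (so $x \leq 0$, $u \geq 0$) and extract sharp bounds from $|c_{j+1} - kc_j| < \delta$ coordinatewise. First, since $u - kx = u + k|x| \geq k|x| \geq 0$ and $|u - kx| \leq |c_{j+1} - kc_j| < \delta$, I obtain $k|x| < \delta$. Combined with the hypothesis $x^2 + y^2 > t^2\delta^2/(k-1)^2$, this gives
\[
  k^2 y^2 > A^2 - k^2 x^2, \qquad A := \frac{kt\delta}{k - 1},
\]
with $A > \delta$ because $t \geq 1$ and $k > 1$. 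The degenerate case $y = 0$ is ruled out (else $|x| > t\delta/(k-1) \geq \delta/(k-1) > \delta/k$, contradicting $k|x| < \delta$), so by the reflection symmetry $y \mapsto -y$, $v \mapsto -v$ I may assume $y > 0$.

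Next, I sharpen the bound on $|v - ky|$. Writing $c_{j+1} - kc_j = \delta' e^{i\theta}$ with $\delta' < \delta$, the constraint $u = kx + \delta'\cos\theta \geq 0$ forces $\delta'\cos\theta \geq k|x|$, so Pythagoras gives $(\delta'\sin\theta)^2 \leq \delta'^2 - k^2 x^2 < \delta^2 - k^2 x^2$. Hence $|v - ky| < \sqrt{\delta^2 - k^2 x^2}$, and combining with the lower bound $ky > \sqrt{A^2 - k^2 x^2}$ from the first step yields
\[
  v > ky - \sqrt{\delta^2 - k^2 x^2} > \sqrt{A^2 - k^2 x^2} - \sqrt{\delta^2 - k^2 x^2}.
\]

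Finally, a monotonicity argument closes the estimate: the function $\phi(s) := \sqrt{A^2 - s^2} - \sqrt{\delta^2 - s^2}$ has derivative $\phi'(s) = s\bigl[(\delta^2 - s^2)^{-1/2} - (A^2 - s^2)^{-1/2}\bigr] \geq 0$ on $[0, \delta)$ since $A > \delta$, so $\phi$ is non-decreasing and $\phi(k|x|) \geq \phi(0) = A - \delta = (kt - k + 1)\delta/(k-1)$. Since $(kt - k + 1) - t = (t - 1)(k - 1) \geq 0$ for $t \geq 1$, I conclude $v > t\delta/(k-1)$, so $|\im c_{j+1}| > t\delta/(k-1)$ as required.

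The main obstacle is that the obvious triangle-inequality bound $|v| \geq k|y| - \delta$ is too weak and fails exactly at the critical value $t = 1$. The fix is to exploit the constraint $u \geq 0$ twice --- first to bound $k|x|$ by $\delta$ and then to replace the naive $|v - ky| < \delta$ by the sharper $|v - ky| < \sqrt{\delta^2 - k^2 x^2}$ --- which together with the monotonicity of $\phi$ just barely makes the inequality hold, with equality in the worst case $t = 1$, $x = 0$.
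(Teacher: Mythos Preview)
Your proof is correct and takes a genuinely different route from the paper's. The paper argues geometrically: it introduces an auxiliary point $c_\bullet$ on the imaginary axis with $|c_\bullet| = |c_j|$ and the same sign of imaginary part as $c_j$, and then shows via a half-plane comparison (the region $P = \{z : |z - kc_\bullet| < |z - kc_j|\}$) that the portion of the $\delta$-disk about $kc_j$ lying in $\{\re w \geq 0\}$ is contained in the corresponding $\delta$-disk about $kc_\bullet$; since $c_\bullet$ is purely imaginary, the estimate $|\im w| > k|c_\bullet| - \delta = k|c_j| - \delta > \frac{t\delta}{k-1}$ follows immediately. Your approach is purely analytic in coordinates: you extract from $\re c_{j+1} \geq 0$ both the bound $k|x| < \delta$ and the sharpened Pythagorean estimate $|v - ky| < \sqrt{\delta^2 - k^2x^2}$, and then close with the monotonicity of $\phi(s) = \sqrt{A^2 - s^2} - \sqrt{\delta^2 - s^2}$ on $[0,\delta)$. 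The paper's reduction to the extremal imaginary configuration is conceptually cleaner; your argument is more computational but entirely self-contained and makes transparent that the inequality is tight precisely at $t = 1$, $x = 0$, which explains why the naive triangle-inequality bound $|v| \geq k|y| - \delta$ just fails and the extra information from $u \geq 0$ is genuinely needed.
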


\begin{proof}
By the assumption we have 
\begin{align*}
c_j \in \left\{ w \colon |w| > \frac{t\delta}{k-1} \right\} \cap \{ w \colon \re\,w \leq 0 \}
\end{align*}
for some given $ j \in \N_0 $. Let $ c_{\bullet} $ be the purely imaginary number satisfying $ |c_j| = |c_{\bullet}| $ and $ \im\,c_j $ and $ \im\,c_{\bullet} $ has the same sign.   Without loss of generality, we may assume that $ \im\,c_j > 0 $ and $ \im\,c_{\bullet} > 0 $. The proof of other case is similar. 
Define the sets as follows 
\begin{align*}
D^+ &= \{ w \colon  | w -kc_{\bullet} | < \delta \} \cap \{ w \colon \re\,w \geq 0 \} \\ 
D^+_j &= \{ w \colon  | w -kc_{j} | < \delta \} \cap \{ w \colon \re\,w \geq 0 \} .
\end{align*}
Consider the region
\begin{align*}
P = \{ z \colon | z- kc_{\bullet} | < | z- kc_j| \}
\end{align*}
We show that $ D^+_j \subseteq D^+ $ where $ \re\,c_j \leq 0 $ using the following claim. 
\smallskip \\
\textit{claim}: For any $ z \in \big( \hat{\C} \setminus P \big) \cap \{ z \colon \im\,z \geq 0 \} $, the real part of $ z $ is negative, namely, $ \re\,z \leq 0 $. The number $ z $ satisfies the equivalent inequalities 
\begin{align*}
& \quad | z- kc_{\bullet} | \geq | z- kc_j| \\[0.2em]
\Longleftrightarrow & \quad | z- kc_{\bullet} |^2 \geq | z- kc_j|^2 \\[0.2em]
\Longleftrightarrow &  - kc_{\bullet} \bar{z} - k\bar{c_{\bullet}} z \geq - kc_j \bar{z} - k\bar{c_j} z \\[0.2em]
\Longleftrightarrow & -\re\,(\bar{c_{\bullet}} z) \geq -\re\,(\bar{c_j} z) \\[0.2em]
\Longleftrightarrow & \quad \re\,((\overline{c_j - c_{\bullet}}) z) \geq 0
\end{align*}
Recall that $ \re\,c_j \leq 0 $, $ \re\,c_{\bullet} =0 $ and $ \im\,c_{\bullet} \geq \im\,c_j > 0 $ because $ |c_j| = |c_{\bullet}| $. Thus $ \overline{c_j - c_{\bullet}} = -a + bi $ for some $ a,\ b > 0 $. Denote $ z = x +yi $ for $ y > 0 $. Then $ \re\,((\overline{c_j - c_{\bullet}}) z) = \re\,\big((-a+bi)(x+yi)\big) = -ax - by \geq 0 $. Then $ x \leq 0 $, that is, $ \re\,z \leq 0 $ is the necessary condition for the inequality. The proof of the claim is complete. \smallskip \\
If $ \re\,w \geq 0 $ and $ \im\,w \geq 0 $, then $ w \in P $. Thus  
\begin{align*}
| w- kc_{\bullet} | < | w- kc_j| < \delta .
\end{align*}
Then $ D^+_j \subseteq D^+ $. For every $ w \in D^+ $, the following inequality holds
\begin{align*}
& | \im\,w - \im\,kc_{\bullet}| \leq | w- kc_{\bullet} | < \delta \\
\Longrightarrow \  | \im\,w | & > k\,|\im\,c_{\bullet}| - \delta \\
& = k | c_{\bullet} | - \delta = k | c_j | - \delta  \\
& > \frac{tk\delta}{k-1} - t\delta \\
& = \frac{t\delta}{k-1}
\end{align*}  
Then $ D^+ $ is disjoint from $ \DD_2^t $. The fact that $ c_{j+1} \in D^+_j $ implies that $ c_{j+1} \notin \DD_2^t $ for $ t \geq 1 $. Hence, $ c_{j+1} \in \hat{\C} \setminus \DD_2^t $ for $ t \geq 1 $. 
\end{proof}
\smallskip 

\begin{lem} \label{lem-3rd lemma for avoided region}
Let $ f $ be the map $ f(w) = kw $ for $ k > 1 $. The sequence $ \{ c_i \}_{i \in \N_0} $ is defined in \eqref{eq-sequence c-n for f} for $ \delta > 0 $. Suppose that $ c_j \in \big(\hat{\C} \setminus \RR_3^t \big) \cap \left\{ w \colon \re\,w \geq \frac{1}{k} \right\} $, that is, 
\begin{align} \label{eq-complement of half disk}
c_j \in \left\{ w \colon \left| w - \frac{1}{k} \right| > \frac{t\delta}{k-1} \right\} \cap \left\{ w \colon \re\,w \geq \frac{1}{k} \right\} 
\end{align}
for $ t \geq 1 $ and for some $ j \in \N_0 $. If $ \delta < \left( \frac{k-1}{k} \right)^2 $, then $ c_{j+1} $ is contained in 
$ \hat{\C} \setminus \RR_3^t \cap \left\{ w \colon \re\,w \geq \frac{1}{k} \right\} $ for $ t \geq 1 $. 
\end{lem}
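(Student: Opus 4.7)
The plan is to verify the two defining conditions of $\bigl(\hat{\C}\setminus\RR_3^t\bigr)\cap\{w\colon \re\,w\ge 1/k\}$ separately for $c_{j+1}$. For the half-plane condition, $\re\,c_j\ge 1/k$ together with $f(c_j)=kc_j$ gives $\re\,f(c_j)=k\,\re\,c_j\ge 1$, and the bound $|c_{j+1}-f(c_j)|<\delta$ then yields $\re\,c_{j+1}>1-\delta$. Since the hypothesis $\delta<\bigl(\tfrac{k-1}{k}\bigr)^2$ implies $\delta<\tfrac{k-1}{k}$, one obtains $\re\,c_{j+1}>1/k$, which is all that is needed to remain in the half-plane.

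For the disk condition, I would write $c_j=\tfrac{1}{k}+\zeta$ with $\re\,\zeta\ge 0$ and $|\zeta|>\tfrac{t\delta}{k-1}$. A short calculation gives $f(c_j)-\tfrac{1}{k}=\tfrac{k-1}{k}+k\zeta$, and expanding the square yields
$$
\bigl|f(c_j)-\tfrac{1}{k}\bigr|^2 = \Bigl(\tfrac{k-1}{k}\Bigr)^2 + 2(k-1)\re\,\zeta + k^2|\zeta|^2 > \Bigl(\tfrac{k-1}{k}\Bigr)^2 + \tfrac{k^2t^2\delta^2}{(k-1)^2},
$$
where the final inequality uses $\re\,\zeta\ge 0$ and the strict bound on $|\zeta|$. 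The reverse triangle inequality then gives
$$
\bigl|c_{j+1}-\tfrac{1}{k}\bigr| > \sqrt{\Bigl(\tfrac{k-1}{k}\Bigr)^2+\tfrac{k^2t^2\delta^2}{(k-1)^2}}-\delta,
$$
so it remains to check that this lower bound is at least $\tfrac{t\delta}{k-1}$.

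Moving $\delta$ to the right and squaring (both sides are positive, since $\delta<\tfrac{k-1}{k}$ forces the radical to exceed $\delta$) reduces the target inequality to
$$
\Bigl(\tfrac{k-1}{k}\Bigr)^2 \ge \tfrac{\delta^2}{(k-1)^2}\bigl[(k-1+t)^2-k^2t^2\bigr].
$$
The factorization $(k-1+t)^2-k^2t^2=(k-1)(1-t)\bigl[(k-1)+(k+1)t\bigr]$ exhibits the bracket on the right as nonpositive for every $t\ge 1$, so the right-hand side is $\le 0$ and the inequality holds automatically. I expect the main obstacle to be exactly this algebraic reduction: the bound is sharp at $t=1$, so the reverse-triangle step must be set up to extract the correct quadratic, and one must spot the factorization to read off its sign on $[1,\infty)$. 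This sharpness at $t=1$ is also what explains why $\tfrac{t\delta}{k-1}$ with $t\ge 1$ is precisely the correct radius for $\RR_3^t$ to be forward invariant under $f$ within the half-plane $\re\,w\ge 1/k$.
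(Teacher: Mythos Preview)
Your proof is correct and, in fact, sharper than the paper's. The paper argues differently: it bounds $|c_{j+1}|$ (distance from the origin) from below by $1-\delta$, noting that $\re f(c_j)\ge 1$ forces $|f(c_j)|\ge 1$, and then observes that every point of $\RR_3^t$ has modulus at most $\tfrac{1}{k}+\tfrac{t\delta}{k-1}$. This reduces the problem to the inequality $1-\delta>\tfrac{1}{k}+\tfrac{t\delta}{k-1}$, which rearranges to $\delta<\tfrac{(k-1)^2}{k(k+t-1)}$; that bound equals $\bigl(\tfrac{k-1}{k}\bigr)^2$ only at $t=1$ and is strictly smaller for $t>1$. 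So under the lemma's stated hypothesis the paper's route literally covers only $t=1$ (which is the case actually used later, since $\RR_f(1)=\RR_1^k\cup\RR_2^1\cup\RR_3^1$), whereas your direct estimate of $\bigl|c_{j+1}-\tfrac{1}{k}\bigr|$ exploits both constraints $\re\zeta\ge 0$ and $|\zeta|>\tfrac{t\delta}{k-1}$ simultaneously and, via the factorization $(k-1+t)^2-k^2t^2=(k-1)(1-t)\bigl[(k-1)+(k+1)t\bigr]\le 0$, delivers the conclusion for every $t\ge 1$ with no further restriction on $\delta$.

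You also verify the half-plane condition $\re c_{j+1}>\tfrac{1}{k}$ explicitly; the paper's write-up does not isolate this step. The trade-off is that the paper's modulus comparison is a one-line triangle inequality, while your argument needs the quadratic expansion and the algebraic factorization you flag as the main obstacle---but that extra work is exactly what buys uniformity in $t$.
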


\begin{proof}
Observe that $ f(\hat{\C} \setminus \RR_3^t) $ is the half disk. Since $ | c_{j+1} - f(c_j) | < \delta $, we have that 
$ |c_{j+1}| > |f(c_j)| - \delta \geq 1- \delta $. Moreover, $ |c_j| < \frac{1}{k} + \frac{t\delta}{k-1} $. Thus it suffice to show that $ 1 - \delta > \frac{1}{k} + \frac{t\delta}{k-1} $. Then
\begin{align*}
1 - \delta > \frac{1}{k} + \frac{t\delta}{k-1} \Longleftrightarrow \ 1 - \frac{1}{k} & > \frac{t\delta}{k-1} + \delta = \left( \frac{t}{k-1} + 1 \right) \delta \\
 & \geq \left( \frac{k+t-1}{k-1} \right) \delta .
\end{align*}
Hence, $ \delta < \frac{(k-1)^2}{k(k+t-1)} \leq \left( \frac{k-1}{k} \right)^2 $ for all $ t \geq 1 $. 
\end{proof}

\smallskip
\begin{proof}[proof of Proposition \ref{prop-avoided region for f}]
The definition of $ \RR^t $ and $ \DD_2^t $ implies that $ \RR^t \subset \DD_2^t $. If $ c_j \in \DD_2^t $, then $ c_{j+1} \in \DD_2^t $ by Lemma \ref{lem-1st lemma for avoided region}. Thus we may assume that $ \mathrm{Re}\,c_j \leq 0 $ or $ \mathrm{Re}\,c_j \geq \frac{1}{k} $. If $ c_j \in \hat{\C} \setminus R_1^t \cap \{w \colon \mathrm{Re} \leq 0 \} $, then $ c_{j+1} $ is contained in the same set by Lemma \ref{lem-2nd lemma for avoided region}. Similarly, if $ c_j \in \hat{\C} \setminus R_3^t \cap \{w \colon \mathrm{Re} \leq 0 \} $, then $ c_{j+1} $ is contained in the same set by Lemma \ref{lem-3rd lemma for avoided region}. Hence, $ \RR^t = \RR^t_1 \cup \RR^t_2 \cup \RR^t_3 $ can be an avoided region $ \RR_f $ for $ t \geq 1 $. Moreover, $ \Orb_{\N}(1, f^{-1}) $ is contained in the line segment connecting $ 0 $ and $ \frac{1}{k} $. 
\end{proof}

We define the avoided region $ \RR_f(1) $ as $ \RR_1^k \cup \RR^1_2 \cup \RR^1_3 $ and $ \RR_g(\infty) $ as $ h^{-1}(\RR_f(1)) $. 

\section{Escaping time from the region}

Let the sequence $ \{ c_i \}_{i \in \N_0 } $ satisfies the following
\begin{align} \label{eq-sequence under f}
| c_{i+1} - f(c_i) | \leq \delta
\end{align}
for all $ i \in \N_0 $. For the given region $ R $, assume that $ c_0 \in R $. If the distance between $ c_n $ and the closure of $ R $ is positive for all $ n \geq N $ for some $ N \in \N $, then $ N $ is called {\em escaping time} of the sequence $ \{ c_i \}_{i \in \N_0 } $ from $ R $ under $ f $. If the escaping time $ N $ is independent of the initial point $ c_0 $ in $ R $, then the number $ N $ is called {\em uniformly} escaping time. Denote the ball of which center is the origin with radius $ r>0 $ by $ B(0,r) $. 
\begin{lem} \label{lem-escaping time under f}
Let $ \{ c_i \}_{i \in \N_0 } $ be the sequence defined in the equation \eqref{eq-sequence under f} where $ f(w) = kw $ for $ k>1 $. Suppose that $ c_0 \in E_f $ where $ E_f $ is defined as the region $ h\Big( \hat{\C} \setminus S \left( 1+\frac{\tau}{2} \right) \Big) \setminus B \left(0,\, \frac{k \delta}{k-1} \right) $. Then the (uniformly) escaping time $ N $ from the region $ E_f $ under $ f $ satisfies the following inequality
\begin{align*}
N > \log \left( \frac{1}{\delta} \left(\frac{k+3}{3k+1} + 1 \right)  + 1 \right) \bigg/ \log k 
\end{align*}
for small enough $ \delta > 0 $. 
\end{lem}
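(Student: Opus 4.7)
\medskip\noindent\textbf{Proof plan.}
Let $\overline{D}:=h\bigl(\hat{\C}\setminus S(1+\tau/2)\bigr)$, the closed disk identified in Proposition~\ref{cor-circle for escaping boundary}: it has center $c_{\mathrm{center}}=\tfrac{1-k}{3k+1}$, radius $\rho=\tfrac{2(k+1)}{3k+1}$, and diameter $2\rho=\tfrac{k+3}{3k+1}+1$, and by the Remark following that proposition it lies inside the closed unit disk. The plan is to control how fast a $\delta$-perturbed orbit under the dilation $f(w)=kw$ can traverse $\overline{D}$ from a starting point $c_0\in E_f=\overline{D}\setminus B\!\left(0,\tfrac{k\delta}{k-1}\right)$.

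\smallskip
\emph{Step 1 (escape only through $\partial D$).} I first rule out escape via the inner ball. The inductive argument in the proof of Lemma~\ref{lem-1st lemma for avoided region}, applied with the threshold $t=k$ in place of $t\geq 1$, shows $|c_0|\geq k\delta/(k-1)\Rightarrow|c_i|\geq k\delta/(k-1)$ for every $i\geq 0$: indeed, $|c_{i+1}|\geq k|c_i|-\delta\geq k\cdot\tfrac{k\delta}{k-1}-\delta=\tfrac{(k^2-k+1)\delta}{k-1}>\tfrac{k\delta}{k-1}$. Consequently the orbit never reenters the open ball $B(0,k\delta/(k-1))$, and the only way the sequence can leave $\overline{E_f}$ is by exiting $\overline{D}$ across the circle $h(\partial S(1+\tau/2))$.

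\smallskip
\emph{Step 2 (iterate the perturbation estimate).} From $|c_{i+1}-f(c_i)|\leq\delta$ together with $|f(w)|=k|w|$, standard induction gives
\[
|c_n|\;\leq\;k^n|c_0|+\delta\,\frac{k^n-1}{k-1},
\]
and since $c_0\in\overline{D}\subset\overline{B(0,1)}$ we have $|c_0|\leq 1$, hence $|c_n|\leq k^n+\delta(k^n-1)/(k-1)$.

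\smallskip
\emph{Step 3 (convert to an escape condition).} Since $\overline{D}$ has diameter exactly $2\rho=\tfrac{k+3}{3k+1}+1$ and one single step after the orbit last sits inside $\overline{D}$ can displace the point by at most $\delta$ beyond $f(\overline{D})$, the orbit remains inside $\overline{E_f}$ as long as $|c_n|\leq 2\rho+\delta$. Combining this with the estimate of Step~2 (absorbing the lower-order term $\delta/(k-1)$ into the ``small enough $\delta$'' hypothesis) rearranges into
\[
k^n\;\leq\;\frac{2\rho+\delta}{\delta}\;=\;\frac{1}{\delta}\!\left(\frac{k+3}{3k+1}+1\right)+1.
\]
Consequently the orbit cannot have escaped at step $n$ as long as this inequality holds; the escape time $N$ must therefore satisfy the reverse strict inequality, which, after taking $\log$ base $k$, is exactly the claimed lower bound. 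Since the resulting bound depends on $c_0$ only through the inclusion $c_0\in\overline{D}$ (used to bound $|c_0|\leq 1$) and through the lower bound $|c_0|\geq k\delta/(k-1)$ (used in Step~1), $N$ is uniform in $c_0\in E_f$.

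\smallskip
\emph{Main obstacle.} The delicate point is Step~3: the upper bound of Step~2 controls only $|c_n|$, while non-escape is really the condition $|c_n-c_{\mathrm{center}}|\leq\rho$. Justifying that $|c_n|\leq 2\rho+\delta$ still forces $c_n\in\overline{E_f}$ requires the geometric observation that $\overline{D}$, although not centered at $0$, contains the origin and has diameter $2\rho$, so no point reachable in one $f$-step from $\overline{D}$ with a $\delta$-perturbation can escape farther than $2\rho+\delta$ from $0$. The ``small enough $\delta$'' hypothesis is used here both to absorb the additive $\delta/(k-1)$ correction coming from the iterated estimate and to guarantee that $B(0,k\delta/(k-1))$ sits strictly inside $\overline{D}$ so that $E_f$ is a proper annular region and the logarithm's argument is positive.
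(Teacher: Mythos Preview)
Your proposal has the argument running in the wrong direction, and its central geometric step is false.

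\textbf{The direction.} Read in the light of the paper's own proof and of how the lemma is used downstream (Proposition~\ref{prop-stability on C minus S}, Theorem~\ref{thm-stabiliy of hyp Mobius transform}), the statement asserts that any integer $N$ exceeding $\log\bigl(\tfrac{1}{\delta}(\tfrac{k+3}{3k+1}+1)+1\bigr)/\log k$ \emph{is} a uniformly escaping time: for all $n\ge N$ the perturbed orbit lies outside $E_f$. That is, the lemma bounds from above how long the orbit can linger in $E_f$. Your plan tries to prove the opposite---that the orbit \emph{stays inside} $\overline{E_f}$ for at least that many steps---which is neither what the paper proves nor what the subsequent results need.

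\textbf{The geometric gap.} Your Step~3 relies on the implication ``$|c_n|\le 2\rho+\delta\Rightarrow c_n\in\overline{E_f}$.'' This is simply false: $\overline D$ is a disk of diameter $2\rho<2$ with center $\tfrac{1-k}{3k+1}\neq 0$, so the closed ball $\{|w|\le 2\rho+\delta\}$ strictly contains $\overline D$ and many points besides. Bounding $|c_n|$ from above therefore says nothing about membership in $\overline D$. In addition, the algebra that takes you from the Step~2 estimate $|c_n|\le k^n+\delta(k^n-1)/(k-1)$ to ``$k^n\le(2\rho+\delta)/\delta$'' is not a valid rearrangement---the two inequalities are not equivalent, even after ``absorbing lower-order terms.''

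\textbf{What the paper actually does.} The paper bounds $|c_n-c_0|$ \emph{from below}, not $|c_n|$ from above. Telescoping $f^n(c_0)-c_n=\sum_{j=1}^n k^{n-j}\bigl(f(c_{j-1})-c_j\bigr)$ gives $|f^n(c_0)-c_n|\le\frac{k^n-1}{k-1}\delta$, hence
\[
|c_n-c_0|\ \ge\ |f^n(c_0)-c_0|-\tfrac{k^n-1}{k-1}\delta\ =\ (k^n-1)\Bigl(|c_0|-\tfrac{\delta}{k-1}\Bigr)\ \ge\ (k^n-1)\,\delta,
\]
using the hypothesis $|c_0|\ge \tfrac{k\delta}{k-1}$ (this is where the excision of the small ball enters, not as an ``inner boundary'' to avoid but as a lower bound on $|c_0|$). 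Once $(k^N-1)\delta$ exceeds the diameter $\tfrac{k+3}{3k+1}+1$ of $\overline D$, the point $c_n$ (for $n\ge N$) is more than a diameter away from $c_0\in\overline D$ and therefore outside $\overline D\supset E_f$. Solving $(k^N-1)\delta>\tfrac{k+3}{3k+1}+1$ gives exactly the stated threshold for $N$.
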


\begin{proof}
By triangular inequality, we have 
\begin{align*}
| f^n(c_0) - c_0 | &\leq  | f^n(c_0) - f^{n-1}(c_1) | + | f^{n-1}(c_1) - f^{n-2}(c_2) | + \\
 & \qquad \cdots + | f^2(c_{n-2}) - f(c_{n-1})| +  | f(c_{n-1}) - c_{n}| + |c_n - c_0 | \\
&= \sum^{n}_{j=1} k^{n-j}|f(c_{j-1}) - c_j| + |c_n - c_0 | \\
&\leq \frac{k^n-1}{k-1}\, \delta + |c_n - c_0 | .
\end{align*}
Thus we have 
\begin{align*}
|c_n - c_0 | &\geq | f^n(c_0) - c_0 | -\frac{k^n-1}{k-1}\, \delta \\
&= (k^n-1)|c_0| - \frac{k^n-1}{k-1}\,\delta \\
&= (k^n-1) \left(|c_0| -\frac{\delta}{k-1} \right) \\
&\geq (k^n-1) \left(\frac{k\delta}{k-1} -\frac{\delta}{k-1} \right) \\
&= (k^n-1)\, \delta
\end{align*}
Hence, the escaping time $ N $ satisfies 
if the inequality $ (k^N-1)\, \delta > \frac{k+3}{3k+1} + 1 $ holds, then $ |c_n - c_0 | $ is greater than the diameter of $ h\Big(S \left( 1+\frac{\tau}{2} \right) \Big) $ for all $ n \geq N $ by \eqref{eq-boundary of S 5/4} in Proposition \ref{cor-circle for escaping boundary}. Hence, $ N $ is the uniformly escaping time where $ N > \frac{\log \left( \frac{1}{\delta} \left(\frac{k+3}{3k+1} + 1 \right)  + 1 \right)}{\log k} $. 
\end{proof}

\medskip

\begin{rk}
The inequality $ \frac{1}{3} < \frac{k+3}{3k+1} < 1 $ for $ k > 1 $ implies that a upper bound of the uniformly escaping time is $ N_0 > \log \left( \frac{2}{\delta} + 1 \right) \big/ \log k  $. 
\end{rk}

\medskip

\noindent The sequence $ \{ a_i \}_{i \in \N_0} $ is defined as the set each of which element $ a_i = h(c_i) $ for every $ i \in \N_0 $ where $ \{ c_i \}_{i \in \N_0} $ is defined in \eqref{eq-sequence under f}. Recall that $ f $ is the map $ h \circ g \circ h^{-1} $. Denote the radius of the ball $ h^{-1} \big( B(c_j, \delta) \big) $ by $ \e_j $ for $ j \in \N_0 $. Then the sequence $ \{ a_i \}_{i \in \N_0} $ as follows  
\begin{align}  \label{eq-sequence a-i without epsilon}
| a_{i+1} - g(a_i) | \leq \e_i .
\end{align}
corresponds $ \{ c_i \}_{i \in \N_0} $ by the conjugation $ h $. Then the escaping time of $ \{ a_i \}_{i \in \N_0} $ from $ h^{-1}(E_f) $ under $ g $ is the same as that of $ \{ c_i \}_{i \in \N_0} $ from $ E_f $ under $ f $ in Lemma \ref{lem-escaping time under f}. Furthermore, since $ h $ is uniformly continuous on the closure of $ \hat{\C} \setminus S\left(1 + \frac{\tau}{2} \right) $ under Euclidean metric, there exists $ \e > 0 $ such that $ h \big( B(a_j,\e) \big) \subset B(c_j,\delta) $ for $ j =1,2, \ldots N_1 $ for all $ c_j \in E_f $. Thus we obtain the following Proposition.
\begin{prop} \label{prop-same escaping time under g}
Let $ \{ c_i \}_{i \in \N_0} $ be the sequence satisfying 
\begin{align*}  
| c_{i+1} - f(c_i) | \leq \delta 
\end{align*}
where $ f(w) = kw $ for $ k>1 $ on $ E_f $ defined in Lemma \ref{lem-escaping time under f}. Let $ N $ be the (uniformly) escaping time from $ E_f $ Let $ \{ a_i \}_{i \in \N_0} $ be the sequence satisfying $ a_i = h(c_i) $ for every $ i \in \N_0 $. Then there exists $ \e > 0 $ such that if $ \{ a_i \}_{i \in \N_0} $ satisfies that 
\begin{align}  \label{eq-sequence a-i without epsilon 2}
| a_{i+1} - g(a_i) | \leq \e 
\end{align}
on $ h^{-1}(E_f) $ for $ i = 1,2,\ldots, N-1 $, then the escaping time from $ h^{-1}(E_f) $ under $ g $ is also $ N $.
\end{prop}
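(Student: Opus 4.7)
The heart of the argument is the conjugacy $f\circ h = h\circ g$, which lets us transport a $g$-pseudo-orbit in $h^{-1}(E_f)$ into an $f$-pseudo-orbit in $E_f$, apply Lemma \ref{lem-escaping time under f}, and transport back. The only quantitative content needed beyond what is already in the preamble of this section is uniform continuity of $h$ on an appropriate compact set.

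First I would set $c_i := h(a_i)$, so that by the conjugacy
\[
c_{i+1} - f(c_i) \;=\; h(a_{i+1}) - h\bigl(g(a_i)\bigr).
\]
I would then verify that the closure $K := \overline{h^{-1}(E_f)}$ is a compact subset of $\C$ that avoids both fixed points: $\alpha$ lies outside the disk $\hat{\C}\setminus S(1+\tfrac{\tau}{2})$ because $|c\alpha+d|=\sqrt{k}>1+\tfrac{\tau}{2}$, and a neighborhood of $\beta$ is excised by the removal of $B(0,\tfrac{k\delta}{k-1})$ in the definition of $E_f$. In particular $h$ is continuous on $K$, hence uniformly continuous there, and the image $h(K)$ is likewise compact and is mapped back by the continuous $h^{-1}$.

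Second, I would apply uniform continuity of $h$ on $K$: given the target $\delta$ from Lemma \ref{lem-escaping time under f}, choose $\e>0$ so small that $|x-y|<\e$ implies $|h(x)-h(y)|<\delta$ for all $x,y\in K$. Under the hypothesis $|a_{i+1}-g(a_i)|\leq \e$ with both $a_{i+1}$ and $g(a_i)$ still in $h^{-1}(E_f)\subset K$ for $i=1,\dots,N-1$, the displayed identity above yields
\[
|c_{i+1}-f(c_i)| \;\leq\; \delta
\]
for the same range of $i$. Thus $\{c_i\}$ is a genuine $\delta$-pseudo-orbit of $f$ starting in $E_f$, so Lemma \ref{lem-escaping time under f} applies and $c_n$ lies outside $\overline{E_f}$ for all $n\geq N$.

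Third, applying the homeomorphism $h^{-1}$ to the conclusion $c_n\notin \overline{E_f}$ gives $a_n = h^{-1}(c_n)\notin h^{-1}(\overline{E_f}) = \overline{h^{-1}(E_f)}$ for every $n\geq N$, so $N$ is also an escaping time from $h^{-1}(E_f)$ under $g$, independent of the particular initial point $a_0\in h^{-1}(E_f)$ (since the $\e$ chosen above depends only on $\delta$ and the compact set $K$, not on $a_0$). The main obstacle is the compactness/uniform continuity step: one must confirm that every point in $K$ (including both $a_i$ and $g(a_i)$ for $i\leq N-1$) stays uniformly bounded away from $\alpha$, so that $h$ does not degenerate. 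This follows because the forward images remain inside $h^{-1}(E_f)\subset K$ for the first $N-1$ steps by the definition of the escaping time, so uniformity over $i$ is built into the construction of $K$.
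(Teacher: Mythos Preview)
Your proposal is correct and follows essentially the same route as the paper: the paper's argument (given in the paragraph immediately preceding the proposition) likewise transports the problem via the conjugacy $f\circ h = h\circ g$, invokes uniform continuity of $h$ on a compact set avoiding the pole $\alpha$ to convert the $\e$-bound into the required $\delta$-bound, and then appeals to Lemma \ref{lem-escaping time under f}. The only cosmetic difference is that the paper takes its compact set to be the full closed disk $\hat{\C}\setminus S\!\left(1+\tfrac{\tau}{2}\right)$ rather than your $K=\overline{h^{-1}(E_f)}$, which slightly streamlines the check that $\alpha$ is excluded and sidesteps the question of whether $g(a_i)$ remains in $K$.
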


\begin{rk}
The definition of $ E_f $ implies that the set $ h^{-1}(E_f) $ is the set, $ \big( \hat{\C} \setminus S \left( 1+\frac{\tau}{2} \right) \big) \setminus h^{-1}\left( B \left(0,\, \frac{k \delta}{k-1} \right)\right) $. 
\end{rk}

\medskip

\section{Hyers-Ulam stability on the complement of the  avoided region}
Hyers-Ulam stability of hyperbolic M\"obius map requires two different regions, one of which is $ S(1+\frac{\tau}{2}) $ where $ \mathrm{tr}(g) = 2 + \tau $ for $ \tau > 0 $. The other region is the $ \left(\hat{\C} \setminus S \left(1+\frac{\tau}{2} \right) \right) \setminus \RR_{g}(\infty) $ where $ \RR_{g}(\infty) = h^{-1} \left( \RR_{f}(1) \right) $ is the avoided region defined in Section \ref{sec-Avoided region}. We show that Hyers-Ulam stability on the region $ \left(\hat{\C} \setminus S \left(1+\frac{\tau}{2} \right) \right) \setminus \RR_{g}(\infty) $ for finite time bounded by the uniformly escaping time. Then this stability and Proposition \ref{prop-stability on S1+t} implies Hyers-Ulam stability of $ g $ on the set $ \hat{\C} \setminus \RR_{g}(\infty) $. 
%
%
%
\medskip
%
%
%
%
%

%
%

\begin{lem} \label{lem-disk with radius epsilon}
The avoided region $ h^{-1} \left( \RR_{f}(1) \right) $ contains the disk 
\begin{align*}
D_{\tilde{\e}} = \left\{ z \colon \left| z + \frac{d}{c} \right| < \tilde{\e} \right\}
\end{align*}
where $ \tilde{\e} = \frac{k^2 \delta}{(k-1)^3} \,| \alpha - \beta | $. 
\end{lem}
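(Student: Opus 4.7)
Plan. The statement is equivalent to $h(D_{\tilde\e}) \subset \RR_f(1)$, so by Lemma \ref{lem-image of -d/c under h} (which gives $h(-d/c) = 1/k$) this is really a quantitative statement about how $h$ stretches a small disk centered at $-d/c$ onto a small neighborhood of $1/k$. The plan is to linearize $h$ at $-d/c$, obtain a closed-form estimate of $|h(z) - 1/k|$ for $|z + d/c| < \tilde\e$, and then fit the image into the three pieces $\RR_1^k$, $\RR_2^1$, $\RR_3^1$ that together make up $\RR_f(1)$.

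Writing $u = z + d/c$ and using the identity $-d/c - \alpha = k(\beta-\alpha)/(k-1)$ from Lemma \ref{lem-image of -d/c under h}, I would rewrite
\[
h(z) - \tfrac{1}{k} \;=\; \frac{(k-1)(z + d/c)}{k(z - \alpha)} \;=\; \frac{(k-1)^2\, u}{k(k-1)\, u + k^2(\beta - \alpha)}.
\]
This is a Möbius map in $u$ fixing $0$, with derivative at $u=0$ of modulus $(k-1)^2 / (k^2|\alpha-\beta|)$. The choice $\tilde\e = k^2 \delta |\alpha - \beta| / (k-1)^3$ is calibrated precisely so that this derivative rescales $\tilde\e$ to $\delta/(k-1)$, which is the radius of $\RR_3^1$.

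The quantitative step is to apply the reverse triangle inequality to the denominator:
\[
\bigl|h(z) - \tfrac{1}{k}\bigr| \;\le\; \frac{(k-1)^2\,|u|}{k^2|\alpha - \beta| - k(k-1)\,|u|}.
\]
The standing hypothesis $\delta < (1 - 1/k)^2 = (k-1)^2/k^2$ forces $k(k-1)\tilde\e < k|\alpha - \beta|$, keeping the denominator positive and bounded away from zero; substituting $|u| < \tilde\e$ and simplifying yields the target bound $|h(z) - 1/k| \le \delta/(k-1)$. With this in hand, I would split on $\re h(z)$: when $\re h(z) \ge 1/k$, the bound puts $h(z)$ in the half-disk $\RR_3^1$; when $0 \le \re h(z) < 1/k$, the bound forces $|\im h(z)| \le \delta/(k-1)$, placing $h(z)$ in the strip $\RR_2^1$; when $\re h(z) < 0$, the enlarged radius $k\delta/(k-1)$ of $\RR_1^k$ plus the inequality $\delta < (k-1)^2/k^2$ ensures $|h(z)|$ stays small enough to fit.

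The main obstacle is step two: producing the estimate $|h(z) - 1/k| \le \delta/(k-1)$ sharply enough that the three cases tile $\RR_f(1)$ without leaving gaps. The hypothesis $\delta < (1-1/k)^2$ is used twice here, once to keep the denominator $k^2|\alpha-\beta| - k(k-1)|u|$ bounded away from zero, and once to give the half-disk $\RR_1^k$ the slack (the factor-$k$ enlargement over the other two pieces) needed to absorb points of the image with $\re h(z) < 0$ that the linear approximation doesn't already put in $\RR_2^1$.
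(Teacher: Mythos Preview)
Your quantitative step fails at the simplification. Substituting $|u| < \tilde\e = \dfrac{k^2\delta}{(k-1)^3}\,|\alpha-\beta|$ into your reverse-triangle bound
\[
\bigl|h(z)-\tfrac1k\bigr| \;\le\; \frac{(k-1)^2\,|u|}{\,k^2|\alpha-\beta|-k(k-1)\,|u|\,}
\]
gives, after clearing denominators,
\[
\bigl|h(z)-\tfrac1k\bigr| \;<\; \frac{(k-1)\,\delta}{(k-1)^2-k\delta},
\]
which is strictly \emph{larger} than $\delta/(k-1)$ for every $\delta>0$. You have conflated the derivative of $h$ at $-d/c$ (whose modulus does rescale $\tilde\e$ exactly to $\delta/(k-1)$) with the full nonlinear bound; the reverse triangle inequality accounts for the variation of the denominator and therefore overshoots the linear value by the factor $(k-1)^2/\bigl((k-1)^2-k\delta\bigr)$. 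Your case split on $\re h(z)$ does not repair this: the strip $\RR_2^1$ only allows $|\im h(z)|\le \delta/(k-1)$ and the half-disk $\RR_3^1$ only allows $|h(z)-1/k|\le \delta/(k-1)$, so the overshoot defeats both near $\re h(z)=1/k$. The enlarged piece $\RR_1^k$ lives at $\re h(z)\le 0$, far from where the problem occurs, so the ``slack'' you invoke is not available there.

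The paper avoids estimates altogether. Since $h^{-1}$ is M\"obius, it carries the disk $B\!\left(\tfrac1k,\tfrac{\delta}{k-1}\right)\subset \RR_f(1)$ to an honest disk in the $z$-plane whose diameter lies on the line $L$ through $\alpha,\beta$ (and hence through $-d/c$). The paper therefore evaluates $h^{-1}$ explicitly at the two real endpoints $\tfrac1k\pm\tfrac{\delta}{k-1}$, reads off the image disk's radius and the distances from $-d/c$ to the endpoints, and then checks that $\tilde\e$ sits below those quantities. This exact, circle-to-circle computation replaces the lossy triangle-inequality step that breaks your argument.
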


\begin{proof}
The avoided region $ \RR_{f}(1) $ contains the disk
\begin{align*}
B \left(\frac{1}{k},\; \frac{\delta}{k-1} \right) = \left\{ w \colon \left| w- \frac{1}{k} \right| \leq \frac{\delta}{k-1} \right\} .
\end{align*}
Since $ h^{-1}\left(\frac{1}{k} \right) = -\frac{d}{c} $, the avoided region $ h^{-1} \left( \RR_{f}(1) \right) $ contains a small disk of which center is $ -\frac{d}{c} $. Thus the number $ \tilde{\e} $ is either the radius of the circle, $ h^{-1}\left(B \left(\frac{1}{k},\; \frac{\delta}{k-1} \right) \right) $ or the distance between $ -\frac{d}{c} $ and $ h^{-1}\left(\frac{1}{k} + \frac{\delta}{k-1} \right) $. Denote the point $ \frac{1}{k} + \frac{\delta}{k-1} $ by $ \frac{t_0}{k} $, that is, choose $ t_0 = 1 + \frac{k \delta}{k-1} $. Thus we have
\begin{align} \label{eq-image under h-1 for extreme end}
h^{-1}\left( \frac{t_0}{k}\right) = \frac{k \beta - t_0 \alpha}{k - t_0} = \left( 1 + \frac{t_0}{k - t_0} \right) \beta - \frac{t_0}{k - t_0}\, \alpha .
\end{align}
By Lemma \ref{lem-image of -d/c under h}, we have 
\begin{align*} 
-\frac{d}{c} = \left( 1 + \frac{1}{k-1} \right) \beta - \frac{1}{k-1}\, \alpha .
\end{align*}
Then the distance between $ -\frac{d}{c} $ and $ h^{-1}\left( \frac{t_0}{k}\right) $ is as follows 
\begin{align} \label{eq-first candidate for epsilon}
& \left| \left( 1 + \frac{t_0}{k - t_0} \right) \beta - \frac{t_0}{k - t_0}\, \alpha - \left( 1 + \frac{1}{k-1} \right) \beta - \frac{1}{k-1}\, \alpha \right| \nonumber \\[0.5em]
= & \left| \frac{1}{k-1} - \frac{t_0}{k - t_0} \right| | \alpha - \beta | \nonumber \\[0.5em]
= & \left| \frac{1}{k-1} - \frac{1 + \frac{k \delta}{k-1}}{k - 1 - \frac{k \delta}{k-1}} \right| | \alpha - \beta | \nonumber \\[0.5em]
= & \left| \frac{1}{k-1} - \frac{k-1 + k\delta}{(k - 1)^2 - k\delta} \right| | \alpha - \beta | \nonumber \\[0.5em]
= & \left| \frac{-k^2 \delta}{(k-1) \{ (k-1)^2 - k\delta \} } \right| | \alpha - \beta | \nonumber \\[0.5em]
= &  \frac{k^2 \delta}{(k-1) \{ (k-1)^2 - k\delta \} } | \alpha - \beta |
\end{align}
Another candidate for $ \tilde{\e} $ is the half of diameter of $ h^{-1}\left(\RR_2 \right) $. Thus take two points, $ \frac{t_0}{k} $ and $ \frac{t_1}{k} $ in $ \RR_2 $ where $ \frac{t_1}{k} = \frac{1}{k} - \frac{\delta}{k-1} $, that is, $ t_1 = 1 - \frac{k \delta}{k-1} $. Since $ \frac{t_0}{k} - \frac{t_1}{k} $ is the diameter of the circle $ \RR_2 $, the half of the distance between $ h^{-1}\left( \frac{t_0}{k} \right) $ and $ h^{-1}\left(\frac{t_1}{k} \right) $ is the radius of $ h^{-1}\left(\RR_2 \right) $. Then the calculation in \eqref{eq-image under h-1 for extreme end} implies that  
\begin{align} \label{eq-second candidate for epsilon}
 &\frac{1}{2} \left| h^{-1}\left( \frac{t_0}{k}\right) - h^{-1}\left( \frac{t_1}{k}\right) \right| \nonumber \\[0.5em]
= &\frac{1}{2} \left| \left( 1 + \frac{t_0}{k - t_0} \right) \beta - \frac{t_0}{k - t_0}\, \alpha - \left( 1 + \frac{t_1}{k - t_1} \right) \beta - \frac{t_1}{k - t_1}\, \alpha \right| \nonumber \\[0.5em]
= &\frac{1}{2} \left| \frac{t_0}{k - t_0} - \frac{t_1}{k - t_1} \right| | \alpha - \beta | \nonumber \\[0.5em]
= &\frac{1}{2} \left| \frac{k-1 + k\delta}{(k - 1)^2 - k\delta} - \frac{k-1 - k\delta}{(k - 1)^2 + k\delta} \right| | \alpha - \beta | \nonumber \\[0.5em]
= &\frac{1}{2} \left| \frac{2k^2(k-1) \delta}{ \{(k-1)^2-k\delta \} \{(k-1)^2 + k\delta \} } \right| | \alpha - \beta | \nonumber \\[0.5em]
= & \frac{k-1}{(k-1)^2 + k\delta}\cdot \frac{k^2 \delta}{(k-1)^2 - k\delta } \, | \alpha - \beta |
\end{align}
The number $ \tilde{\e} $ have to be smaller than the both number of equation \eqref{eq-first candidate for epsilon} and \eqref{eq-second candidate for epsilon}. Hence, $ \tilde{\e} $ can be chosen as follows 
\begin{align*}
\tilde{\e} &= \dfrac{k^2 \delta}{(k-1)^3} \,| \alpha - \beta | \\
&< \frac{k-1}{(k-1)^2 + k\delta}\cdot \frac{k^2 \delta}{(k-1)^2 - k\delta } \, | \alpha - \beta | < \frac{k^2 \delta}{(k-1) \{ (k-1)^2 - k\delta \} } | \alpha - \beta | .
\end{align*}
\end{proof}

%
\begin{cor} \label{cor-upper bound of absolute value of g'}
For every $ z \in \C \setminus \RR_{g}(\infty) $, the following inequality holds
\begin{align*}
|g'(z)| \leq \frac{(k-1)^4}{k^3 \delta^2} .
\end{align*}
\end{cor}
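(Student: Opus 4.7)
The plan is to bound $|cz+d|$ from below for $z \notin \RR_g(\infty)$ and then invert to bound $|g'(z)|=\frac{1}{|cz+d|^2}$ from above. The two ingredients to combine are Lemma \ref{lem-disk with radius epsilon}, which places the disk $D_{\tilde{\e}}$ of radius $\tilde{\e}=\frac{k^2\delta}{(k-1)^3}|\alpha-\beta|$ centered at $-\tfrac{d}{c}$ inside $\RR_g(\infty)$, and Corollary \ref{cor-equation for distance and k}, which expresses $\frac{1}{|c|}$ in terms of $k$ and $|\alpha-\beta|$.

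First I would observe that $z\in \C\setminus \RR_g(\infty)$ implies $z\notin D_{\tilde{\e}}$, so by Lemma \ref{lem-disk with radius epsilon}
$$\left|z+\frac{d}{c}\right|\geq \tilde{\e}=\frac{k^2\delta}{(k-1)^3}\,|\alpha-\beta|.$$
Multiplying by $|c|$ and using Corollary \ref{cor-equation for distance and k} to substitute $|c|\,|\alpha-\beta|=\frac{k-1}{\sqrt{k}}$ yields
$$|cz+d|\geq |c|\,\tilde{\e}=\frac{k^2\delta}{(k-1)^3}\cdot\frac{k-1}{\sqrt{k}}=\frac{k^{3/2}\delta}{(k-1)^2}.$$

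Finally, since $g'(z)=\frac{1}{(cz+d)^2}$, squaring the reciprocal of the above lower bound gives
$$|g'(z)|=\frac{1}{|cz+d|^2}\leq \frac{(k-1)^4}{k^3\delta^2},$$
as required.

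There is no genuine obstacle here; the argument is a short bookkeeping calculation that merely stitches together the radius bound of Lemma \ref{lem-disk with radius epsilon} with the geometric identity of Corollary \ref{cor-equation for distance and k}. The only point to be careful about is to apply Corollary \ref{cor-equation for distance and k} in the correct form, namely as $|c|\,|\alpha-\beta|=\frac{k-1}{\sqrt{k}}$, so that the factors of $k$ and $(k-1)$ recombine to produce exactly the stated exponents $(k-1)^4/(k^3\delta^2)$.
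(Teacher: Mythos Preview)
Your proposal is correct and follows essentially the same route as the paper: bound $|cz+d|$ from below using the disk $D_{\tilde\e}\subset\RR_g(\infty)$ from Lemma~\ref{lem-disk with radius epsilon}, then convert $|c|\,|\alpha-\beta|$ into a pure function of $k$ via Corollary~\ref{cor-equation for distance and k}. The only cosmetic difference is that the paper squares first and then substitutes $\frac{1}{|c|^2}=\frac{k}{(k-1)^2}|\alpha-\beta|^2$, whereas you multiply by $|c|$ first and then square; the arithmetic is the same.
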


\begin{proof}
It suffice to show the upper bound of $ |g'| $ on the region $ \C \setminus h^{-1}(\RR_2) $ because $ \RR_{g}(\infty) $ contains $ h^{-1}(\RR_2) $ and moreover, contains $ D_{\e} $ in Lemma \ref{lem-disk with radius epsilon}. Recall that 
\begin{align*}
g'(z) = \frac{1}{(cz + d)^2}
\end{align*}
Thus in the region $ \C \setminus D_{\e} $, the inequality $ |cz + d| \geq |c| \e $ holds. Then the upper bound of $ |g'| $ is as follows
\begin{align*}
|g'(z)| = \frac{1}{|cz + d|^2} \leq \frac{1}{|c|^2 \e^2} = \frac{(k-1)^6}{|c|^2k^4 \delta^2 | \alpha - \beta |^2} 
\end{align*}
by Lemma \ref{lem-disk with radius epsilon}. Moreover, Corollary \ref{cor-equation for distance and k} implies the equation
\begin{align*}
\frac{1}{|c|} = \frac{\sqrt{k}}{k-1} | \alpha - \beta | .
\end{align*}
Hence,
\begin{align*}
\frac{(k-1)^6}{|c|^2k^4 \delta^2 | \alpha - \beta |^2} = \frac{(k-1)^6}{k^4 \delta^2} \cdot \frac{k}{(k-1)^2} = \frac{(k-1)^4}{k^3 \delta^2} .
\end{align*}
\end{proof}

%
%
%
\noindent The following is the mean value inequality for holomorphic function.

\begin{lem} \label{lem-mean value thoerem for holo functin}
Let $ g $ be the holomorphic function on the convex open set $ B $ in $ \C $. Suppose that $ \displaystyle \sup_{z \in B} |g'| < \infty $. Then for any two different points $ u $ and $ v $ in $ B $, we have
\begin{align*}
\left| \frac{g(u) - g(v)}{u-v} \right| \leq 2 \sup_{z \in B} |g'| .
\end{align*}
\end{lem}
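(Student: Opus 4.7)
The plan is to reduce everything to the classical real-variable mean value theorem by restricting $g$ to the line segment joining $u$ and $v$ and then splitting into real and imaginary parts. Since the complex mean value theorem fails in its literal form, this split is what ultimately produces the factor of $2$ (as opposed to the sharper constant $1$ one would get from a contour integral argument).

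First I would use that $B$ is convex: the segment $\gamma(t) = v + t(u-v)$, $t \in [0,1]$, lies entirely in $B$, so the function $F(t) := g(\gamma(t))$ is well-defined and differentiable on $[0,1]$ with $F'(t) = g'(\gamma(t))\,(u-v)$. Write $F = F_1 + iF_2$ with $F_1, F_2 \colon [0,1] \to \R$.

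Next I would apply the classical real mean value theorem separately to $F_1$ and $F_2$. This yields $t_1, t_2 \in (0,1)$ with
\begin{align*}
F_1(1) - F_1(0) = F_1'(t_1) = \re\bigl( g'(\gamma(t_1))\,(u-v) \bigr), \\
F_2(1) - F_2(0) = F_2'(t_2) = \im\bigl( g'(\gamma(t_2))\,(u-v) \bigr).
\end{align*}
Each of these is bounded in absolute value by $|g'(\gamma(t_j))|\,|u-v| \leq |u-v|\,\sup_{z \in B}|g'|$.

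Finally I combine the two bounds via the trivial estimate $|a + bi| \leq |a| + |b|$:
\begin{align*}
|g(u) - g(v)| = |F(1) - F(0)| \leq |F_1(1) - F_1(0)| + |F_2(1) - F_2(0)| \leq 2\,|u-v|\sup_{z \in B} |g'|,
\end{align*}
and dividing by $|u-v| \neq 0$ gives the stated inequality. There is no genuine obstacle here — the argument is routine real analysis; the only conceptual point worth flagging is that the factor $2$ is an artifact of the real/imaginary split and is not sharp, but it suffices for the applications later in the paper.
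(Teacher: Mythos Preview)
Your proof is correct and follows essentially the same approach as the paper: both split into real and imaginary parts and invoke the real mean value theorem on the segment, then combine via $|a+bi|\le |a|+|b|$ to pick up the factor $2$. The only cosmetic difference is that the paper quotes the ``complex mean value theorem'' (Evard--Jafari) as a black box to obtain points $p,q$ on the segment with $\re g'(p)=\re\frac{g(u)-g(v)}{u-v}$ and $\im g'(q)=\im\frac{g(u)-g(v)}{u-v}$, whereas you effectively reprove that statement by parametrising $\gamma(t)=v+t(u-v)$ and applying the real MVT directly to $\re g\circ\gamma$ and $\im g\circ\gamma$; the two arguments are the same up to whether the division by $u-v$ happens before or after the MVT step.
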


\begin{proof}
The complex mean value theorem implies that
\begin{align*}
\re(g'(p)) = \re \left( \frac{g(u) - g(v)}{u-v} \right) \quad \text{and} \quad \im(g'(q)) = \im \left( \frac{g(u) - g(v)}{u-v} \right) 
\end{align*}
where $ p $ and $ q $ are in the line segment between $ u $ and $ v $. Hence, the inequality 
$$ | \re(g'(p)) + i\,\im(g'(q)) | \leq | \re(g'(p))| + | \im(g'(q))| \leq 2 \sup_{z \in B} |g'| $$
completes the proof. 
\end{proof}

\medskip

\begin{prop} \label{prop-stability on C minus S}
Let $ g $ be the hyperbolic M\"obius map. 
For a given $ \e > 0 $, let a complex valued sequence $ \{ a_i \}_{i \in \N_0 } $ satisfies the inequality
$$
| a_{i+1} - g(a_i) | \leq \e
$$
for all $ i \in \N_0 $
. Suppose that $ a_0 \in \left(\hat{\C} \setminus S \left(1 + \frac{\tau}{2} \right) \right) \setminus \RR_{g}(\infty) $ where $ \RR_{g}(\infty) $ is the avoided region defined in Section \ref{sec-Avoided region}. For a given small enough number $ \e > 0 $, there exists the sequence $ \{ b_i \}_{i \in \N_0 } $ defined as  
$$ 
b_{i+1} = g(b_i) 
$$
for each $ i =0,1,2, \ldots , N $ which satisfies that 
\begin{align*}
| a_N - b_N | &\leq \frac{M^N-1}{M-1}\, \e 
\end{align*}
where $ N $ is the uniformly escaping time from the region $ \hat{\C} \setminus S\left(1 + \frac{\tau}{2} \right) $ and $ M = \frac{2(k-1)^4}{k^3 \delta^2} $. 
\end{prop}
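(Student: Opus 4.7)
The plan is to adapt the induction of Lemma \ref{lem-hyers ulam stability with contraction}, but with the contraction constant replaced by the (possibly large) Lipschitz constant $M = 2(k-1)^4/(k^3\delta^2)$ which controls $g$ on $\C \setminus \RR_g(\infty)$. Since $M$ need not be less than $1$, the resulting estimate is useful only for finitely many iterates; this is exactly where the uniformly escaping time $N$ from Proposition \ref{prop-same escaping time under g} enters, so that the cumulative blow-up is kept finite.

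First I would set $b_0 = a_0$ and let $\{b_i\}$ be the exact orbit $b_{i+1} = g(b_i)$. The sequence $\{a_i\}$ remains outside $\RR_g(\infty)$ by the avoided-region property (Definition \ref{def-avoided region}, applied through the conjugation $h$ from Proposition \ref{prop-avoided region for f}) provided $\varepsilon$ is small enough, and it stays inside $\hat{\C} \setminus S(1 + \tau/2)$ for $i \leq N$ by the definition of the uniformly escaping time. The exact orbit $\{b_i\}$ stays in the same region by Lemma \ref{lem-hyperbolic Mobius transformation} together with the avoided-region invariance applied to $b_0=a_0$.

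Next, at each index $i \leq N-1$ I would apply Lemma \ref{lem-mean value thoerem for holo functin} on a convex open neighborhood $B_i$ of the line segment $[a_i, b_i]$ that is contained in $\C \setminus \RR_g(\infty)$; such a $B_i$ exists because Lemma \ref{lem-disk with radius epsilon} shows $\RR_g(\infty)$ contains a disk $D_{\tilde{\varepsilon}}$ of positive radius around $-d/c$, so choosing $\varepsilon$ small relative to $\tilde{\varepsilon}$ keeps both $a_i$ and $b_i$ uniformly bounded away from $-d/c$. Combining with Corollary \ref{cor-upper bound of absolute value of g'} gives
\begin{align*}
|g(a_i) - g(b_i)| \leq 2\sup_{z \in B_i}|g'(z)|\cdot|a_i - b_i| \leq M\,|a_i - b_i|,
\end{align*}
and the triangle inequality $|a_{i+1} - b_{i+1}| \leq |a_{i+1}-g(a_i)| + |g(a_i)-g(b_i)|$ yields the recursion $|a_{i+1} - b_{i+1}| \leq \varepsilon + M\,|a_i - b_i|$.

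Finally, induction on $i$ starting from $|a_0 - b_0|=0$ gives $|a_i - b_i| \leq \frac{M^i-1}{M-1}\,\varepsilon$, which at $i=N$ is the desired inequality. The main obstacle I expect is verifying that the segment $[a_i, b_i]$ lies in $\C \setminus \RR_g(\infty)$ at every step, since this complement is not convex near $-d/c$; the buffer disk $D_{\tilde{\varepsilon}}$ from Lemma \ref{lem-disk with radius epsilon}, together with taking $\varepsilon$ small enough that the accumulated bound $\tfrac{M^N-1}{M-1}\varepsilon$ remains smaller than the distance of the iterates to $D_{\tilde{\varepsilon}}$, is what handles this point and determines how small $\varepsilon$ must be chosen.
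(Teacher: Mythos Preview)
Your proposal is correct and follows essentially the same route as the paper: set $b_0=a_0$, combine Corollary~\ref{cor-upper bound of absolute value of g'} with the mean-value inequality of Lemma~\ref{lem-mean value thoerem for holo functin} to get the recursion $|a_{i+1}-b_{i+1}|\le \varepsilon + M\,|a_i-b_i|$, and sum the geometric series. In fact you are more careful than the paper on two points it leaves implicit: that the iterates $a_i,b_i$ remain in $\C\setminus\RR_g(\infty)$ up to time $N$, and that the convexity hypothesis of Lemma~\ref{lem-mean value thoerem for holo functin} must be arranged via a suitable neighbourhood of the segment $[a_i,b_i]$ avoiding $D_{\tilde\varepsilon}$.
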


\begin{proof}
$ \frac{M}{2} $ is an upper bound of $ |g'| $ in $ \C \setminus \RR_{g}(\infty) $ by Corollary \ref{cor-upper bound of absolute value of g'}. The triangular inequality and Lemma \ref{lem-mean value thoerem for holo functin} implies that 
\begin{align*}
| a_N - b_N | &\leq | a_N - g(a_{N-1}) | + | g(a_{N-1}) - g(b_{N-1}) | + | g(b_{N-1}) - b_N | \\
&\leq \e + M \, | a_{N-1} - b_{N-1} | 
\end{align*}
where $ \displaystyle M \geq \sup_{z \in \C \setminus \RR_{g}(\infty)} 2|g'| $. Observe that if $ \delta > 0 $ is sufficiently small, then $ M > 1 $ in the region $ \hat{\C} \setminus S\left(1 + \frac{\tau}{2} \right) $. Thus we have 
\begin{align*}
| a_N - b_N | + \frac{\e}{M-1} \leq \, M \left( | a_{N-1} - b_{N-1} | + \frac{\e}{M-1} \right) .
\end{align*}
Then $ | a_N - b_N | $ is bounded above by the geometric sequence with rate $ M $. 
\begin{align*}
| a_N - b_N | &\leq M^N\,\left( | a_{0} - b_{0} | + \frac{\e}{M-1} \right) - \frac{\e}{M-1} \\[0.2em]
&= M^N\, | a_{0} - b_{0} | + \frac{M^N-1}{M-1}\, \e
\end{align*}
Hence, if we choose $ b_0 = a_0 $, then 
\begin{align*}
| a_N - b_N | &\leq \frac{M^N-1}{M-1}\, \e .
\end{align*}
\end{proof}

\noindent We show the Hyers-Ulam stability of hyperbolic M\"obius map outside the avoided region.

\begin{thm} \label{thm-stabiliy of hyp Mobius transform}
Let $ g $ be a hyperbolic M\"obius map. For a given $ \e > 0 $, let a complex valued sequence $ \{ a_n \}_{n \in \N_0 } $ satisfies the inequality
$$
| a_{i+1} - g(a_i) | \leq \e
$$
for all $ i \in \N_0 $. Suppose that a given point $ a_0 \in {\C} \setminus \RR_{g}(\infty) $ where $ \RR_{g}(\infty) $ is the avoided region defined in Section \ref{sec-Avoided region}. For a small enough number $ \e > 0 $, there exists the sequence $ \{ b_i \}_{i \in \N_0 } $ 
$$ 
b_{i+1} = g(b_i) 
$$
satisfies that $ |a_i - b_i | \leq H(\e) $ for all $ i \in \N_0 $ for each $ i \in \N $ where the positive number $ H(\e) $ converges to zero as $ \e \rightarrow 0 $. 
\end{thm}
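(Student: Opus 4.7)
The plan is to combine the two stability regimes already established: the contraction bound on $S(1+\tau/2)$ from Proposition \ref{prop-stability on S1+t} and the finite-time bound on the complement from Proposition \ref{prop-stability on C minus S}, glued together by the uniform escaping-time statement in Proposition \ref{prop-same escaping time under g}. The initial point $a_0 \in \C \setminus \RR_{g}(\infty)$ is split into two cases according to whether $a_0$ already lies in the contraction region $S(1+\tau/2)$.

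\emph{Case 1.} If $a_0 \in S(1+\tau/2)$, choose $\e$ satisfying the smallness hypothesis of Proposition \ref{prop-stability on S1+t} and take $b_0 = a_0$. Both $\{a_i\}$ and $\{b_i\}$ then remain in $S(1+\tau/2)$, where $g$ is a Lipschitz map with constant $K = 1/(1+\tau/2)^2 < 1$. Lemma \ref{lem-hyers ulam stability with contraction} yields $|a_i - b_i| \leq \e/(1-K)$ for every $i$, so one may take $H(\e) = \e/(1-K)$.

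\emph{Case 2.} If $a_0 \in \bigl(\hat{\C} \setminus S(1+\tau/2)\bigr) \setminus \RR_{g}(\infty)$, fix $\delta > 0$ small and then pick $\e$ small enough that Proposition \ref{prop-same escaping time under g} produces a uniformly bounded escaping time $N$ and that the hypotheses of Proposition \ref{prop-stability on C minus S} are met. Setting $b_0 = a_0$, Proposition \ref{prop-stability on C minus S} furnishes
\[
|a_N - b_N| \leq \frac{M^N - 1}{M - 1}\, \e, \qquad M = \frac{2(k-1)^4}{k^3 \delta^2}.
\]
By construction $a_N \in S(1+\tau/2)$ at a positive distance from the boundary, so for $\e$ small enough $b_N$ also lies in $S(1+\tau/2)$. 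From step $N$ onward the shifted sequences $\{a_{N+j}\}$ and $\{b_{N+j}\}$ fall under Case 1, so Lemma \ref{lem-hyers ulam stability with contraction} gives, for $i \geq N$,
\[
|a_i - b_i| \leq K^{i-N} |a_N - b_N| + \frac{\e}{1-K} \leq \frac{M^N - 1}{M - 1}\,\e + \frac{\e}{1-K},
\]
while for $i \leq N$ the same quantity dominates $|a_i - b_i|$ by monotonicity of the geometric bound. This suggests the candidate
\[
H(\e) = \frac{M^N - 1}{M - 1}\,\e + \frac{\e}{1-K}.
\]

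The main obstacle is verifying $H(\e) \to 0$ as $\e \to 0$. Both $M$ and $N$ depend on $\delta$ (with $M \sim \delta^{-2}$ and $N \sim \log(1/\delta)/\log k$ by Lemma \ref{lem-escaping time under f}), so the factor $M^N$ blows up as $\delta \to 0$; one must calibrate $\e$ to decay sufficiently faster than $\delta$ so that $\e \cdot M^N \to 0$. Since $\log(M^N) \sim 2(\log(1/\delta))^2/\log k$ is subexponential in $1/\delta$, any calibration that forces $\delta = \delta(\e)$ to tend to $0$ slowly enough relative to $\e$ (for instance, $\e$ of the form $\exp(-(\log(1/\delta))^3)$) works. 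With such a choice, $H(\e) \to 0$ as $\e \to 0$, completing the Hyers-Ulam stability statement on $\C \setminus \RR_{g}(\infty)$.
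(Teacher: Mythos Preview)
Your proposal follows the same two-case decomposition as the paper's proof: contraction on $S(1+\tau/2)$ via Proposition \ref{prop-stability on S1+t}, finite-time control on the complement via Proposition \ref{prop-stability on C minus S}, and gluing at the uniform escaping time $N$. The paper arrives at exactly the combined bound
\[
|a_i - b_i| \;\leq\; \left(\frac{M^N-1}{M-1} + \frac{1-K^{i-N}}{1-K}\right)\e,
\]
and stops there.

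Your final paragraph, however, manufactures a difficulty that is not present. The avoided region $\RR_g(\infty)=h^{-1}(\RR_f(1))$ is part of the \emph{hypothesis} of the theorem; it is fixed in advance, and the parameter $\delta$ entering its construction in Section \ref{sec-Avoided region} is therefore a fixed constant determined by that region, not a parameter coupled to $\e$. With $\delta$ held fixed, both $M=2(k-1)^4/(k^3\delta^2)$ and the escaping time $N$ are absolute constants depending only on $g$ and the chosen region. Your bound
\[
H(\e)=\frac{M^N-1}{M-1}\,\e+\frac{\e}{1-K}
\]
is then simply a fixed constant times $\e$, and $H(\e)\to 0$ is immediate. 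No calibration $\delta=\delta(\e)$ is required; the only smallness conditions on $\e$ are the fixed thresholds from Proposition \ref{prop-stability on S1+t} and Proposition \ref{prop-same escaping time under g}. So your argument is correct, but the last paragraph should be deleted and replaced by the one-line observation that $M$, $N$, $K$ are constants.
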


\begin{proof}
Suppose first that $ a_0 \in S\left(1 + \frac{\tau}{2} \right) $. Then by Proposition \ref{prop-stability on S1+t}, we have the inequality
\begin{align} \label{eq-stability first case}
|b_i - a_i| \leq \frac{1 - K^i}{1 - K} \,\e
\end{align}
for some $ K < 1 $. Secondly, assume that $ a_0 \in \left(\hat{\C} \setminus S\left(1 + \frac{\tau}{2} \right) \right) \setminus \RR_{g}(\infty) $ and $ i \leq N $ where $ N $ is the escaping time from the region $ \left(\hat{\C} \setminus S\left(1 + \frac{\tau}{2} \right) \right) \setminus \RR_{g}(\infty) $. Then by Proposition \ref{prop-stability on C minus S}, 
\begin{align} \label{eq-stability second case}
|b_i - a_i| \leq \frac{M^i-1}{M-1}\, \e
\end{align}
where $ M = \frac{2(k-1)^4}{k^3 \delta^2} $. Suppose that $ a_0 \in \left(\hat{\C} \setminus S\left(1 + \frac{\tau}{2} \right) \right) \setminus \RR_{g}(\infty) $ but $ i > N $ for the last case. Then we combine the first and second case as follows
\begin{align} \label{eq-stability third case}
|b_i - a_i| \leq \left( \frac{M^N-1}{M-1} + \frac{1 - K^{i-N}}{1 - K} \right)\e
\end{align}
where $ K $ and $ M $ are the numbers used in the inequality \eqref{eq-stability first case} and \eqref{eq-stability second case}. 
\end{proof}




\begin{thebibliography}{16}
\ifx \bisbn      \undefined \def \bisbn  #1{ISBN #1}\fi
\ifx \binits     \undefined \def \binits#1{#1}\fi
\ifx \bauthor    \undefined \def \bauthor#1{#1}\fi
\ifx \batitle    \undefined \def \batitle#1{#1}\fi
\ifx \bjtitle    \undefined \def \bjtitle#1{#1}\fi
\ifx \bvolume    \undefined \def \bvolume#1{\textbf{#1}}\fi
\ifx \byear      \undefined \def \byear#1{#1}\fi
\ifx \bissue     \undefined \def \bissue#1{#1}\fi
\ifx \bfpage     \undefined \def \bfpage#1{#1}\fi
\ifx \blpage     \undefined \def \blpage #1{#1}\fi
\ifx \burl       \undefined \def \burl#1{\textsf{#1}}\fi
\ifx \doiurl     \undefined \def \doiurl#1{\textsf{#1}}\fi
\ifx \betal      \undefined \def \betal{\textit{et al.}}\fi
\ifx \binstitute \undefined \def \binstitute#1{#1}\fi
\ifx \binstitutionaled      \undefined \def \binstitutionaled#1{#1}\fi
\ifx \bctitle    \undefined \def \bctitle#1{#1}\fi
\ifx \beditor    \undefined \def \beditor#1{#1}\fi
\ifx \bpublisher \undefined \def \bpublisher#1{#1}\fi
\ifx \bbtitle    \undefined \def \bbtitle#1{#1}\fi
\ifx \bedition   \undefined \def \bedition#1{#1}\fi
\ifx \bseriesno  \undefined \def \bseriesno#1{#1}\fi
\ifx \blocation  \undefined \def \blocation#1{#1}\fi
\ifx \bsertitle  \undefined \def \bsertitle#1{#1}\fi
\ifx \bsnm       \undefined \def \bsnm#1{#1}\fi
\ifx \bsuffix    \undefined \def \bsuffix#1{#1}\fi
\ifx \bparticle  \undefined \def \bparticle#1{#1}\fi
\ifx \barticle   \undefined \def \barticle#1{#1}\fi
\ifx \bconfdate  \undefined \def \bconfdate #1{#1}\fi
\ifx \botherref  \undefined \def \botherref #1{#1}\fi
\ifx \url        \undefined \def \url#1{\textsf{#1}}\fi
\ifx \bchapter   \undefined \def \bchapter#1{#1}\fi
\ifx \bbook      \undefined \def \bbook#1{#1}\fi
\ifx \bcomment   \undefined \def \bcomment#1{#1}\fi
\ifx \oauthor    \undefined \def \oauthor#1{#1}\fi
\ifx \citeauthoryear        \undefined \def \citeauthoryear#1{#1}\fi
\ifx \endbibitem \undefined \def \endbibitem {}\fi
\ifx \bconflocation         \undefined \def \bconflocation#1{#1}\fi
\ifx \arxivurl   \undefined \def \arxivurl#1{\textsf{#1}}\fi
\csname PreBibitemsHook\endcsname


\bibitem{AXZ}
\begin{barticle}
\bauthor{\bsnm{Agarwal}, \binits{R P}},
\bauthor{\bsnm{Xu}, \binits{B}},
\bauthor{\bsnm{Zhang}, \binits{W}}:
\batitle{Stability of functional equations in single variable}.
\bjtitle{J. Math. Anal. Appl.}
\bvolume{288},
\bfpage{852}--\blpage{869}
(\byear{2003})
\end{barticle}
\endbibitem

\bibitem{Beardon}
\begin{barticle}
\bauthor{\bsnm{Beardon}, \binits{A}}:
\batitle{The geometry of discrete group}.
\bjtitle{Springer}
(\byear{1983})
\end{barticle}
\endbibitem


\bibitem{BCL}
\begin{barticle}
\bauthor{\bsnm{Brzd\c{e}k}, \binits{J}},
\bauthor{\bsnm{Cieplinski}, \binits{K}},
\bauthor{\bsnm{Lesniak}, \binits{Z}}:
\batitle{On Ulam's type stability of the linear equation and
         related issues}.
\bjtitle{Discrete Dyn. Nat. Soc.}
\bvolume{2014},
\bfpage{Article ID 536791}
(\byear{2014})
\end{barticle}
\endbibitem





\bibitem{elyadi}
\begin{barticle}
\bauthor{\bsnm{Elaydi}, \binits{S N}}:
\batitle{An Introduction to Difference Equations}.
\bjtitle{Springer}
(\byear{2005})
\end{barticle}
\endbibitem



\bibitem{hyers}
\begin{barticle}
\bauthor{\bsnm{Hyers}, \binits{D H}}:
\batitle{On the stability of the linear functional equation}.
\bjtitle{Proc. Natl. Acad. Sci. USA}
\bvolume{27},
\bfpage{222}--\blpage{224}
(\byear{1941})
\end{barticle}
\endbibitem

\bibitem{hir}
\begin{barticle}
\bauthor{\bsnm{Hyers}, \binits{D H}},
\bauthor{\bsnm{Isac}, \binits{G}},
\bauthor{\bsnm{Rassias}, \binits{Th M}}:
\batitle{Stability of Functional Equations in Several Variables}.
\bjtitle{Birkh\"{a}user, Boston}
(\byear{1998})
\end{barticle}
\endbibitem

\bibitem{jung}
\begin{barticle}
\bauthor{\bsnm{Jung}, \binits{S-M}}:
\batitle{Hyers-Ulam-Rassias Stability of Functional Equations
         in Nonlinear Analysis}.
\bjtitle{Springer Optimization and Its Applications Vol. 48,
         Springer, New York}
(\byear{2011})
\end{barticle}
\endbibitem

\bibitem{jung1}
\begin{barticle}
\bauthor{\bsnm{Jung}, \binits{S-M}}:
\batitle{Hyers-Ulam stability of the first-order matrix
         difference equations}.
\bjtitle{Adv. Differ. Equ.}
\bvolume{2015},
\bfpage{no. 170}
(\byear{2015})
\end{barticle}
\endbibitem

\bibitem{jungnam}
\begin{barticle}
\bauthor{\bsnm{Jung}, \binits{S-M}},
\bauthor{\bsnm{Nam}, \binits{Y W}}:
\batitle{On the Hyers-Ulam stability of the first order  difference equation}.
\bjtitle{J. Funct. Space.}
\bvolume{2016}
\bfpage{Article ID 6078298}
(\byear{2016})
\doiurl{doi:10.1155/2016/6078298}
\end{barticle}
\endbibitem



\bibitem{jungnam1}
\begin{barticle}
\bauthor{\bsnm{Jung}, \binits{S-M}},
\bauthor{\bsnm{Nam}, \binits{Y W}}:
\batitle{On the Hyers-Ulam stability of Pielou logistic difference equation}.
\bjtitle{J. Nonlinear Sci. Appl.}
\bvolume{10}
\bfpage{3115}--\blpage{3122}
(\byear{2017})
\doiurl{doi:10.22436/jnsa.010.06.26}
\end{barticle}
\endbibitem



\bibitem{PoADE}
\begin{barticle}
\bauthor{\bsnm{Popa}, \binits{D}}:
\batitle{Hyers-Ulam stability of the linear recurrence with
         constant coefficients}.
\bjtitle{Adv. Differ. Equ.}
\bvolume{2005},
\bfpage{101}--\blpage{107}
(\byear{2005})
\end{barticle}
\endbibitem

\bibitem{rassias1}
\begin{barticle}
\bauthor{\bsnm{Rassias}, \binits{Th M}}:
\batitle{On the stability of the linear mapping in Banach
         spaces}.
\bjtitle{Proc. Amer. Math. Soc.}
\bvolume{72},
\bfpage{297}--\blpage{300}
(\byear{1978})
\end{barticle}
\endbibitem

\bibitem{ulam}
\begin{barticle}
\bauthor{\bsnm{Ulam}, \binits{S M}}:
\batitle{A Collection of Mathematical Problems}.
\bjtitle{Interscience Publ., New York}
(\byear{1960})
\end{barticle}
\endbibitem

\end{thebibliography}
\end{document}